\documentclass[twocolumn, a4paper,10pt,english]{scrartcl}
\pdfoutput=1
\usepackage{babel}
\usepackage{blkarray}
\usepackage{color}
\usepackage{colortbl}
\usepackage{float}
\usepackage{graphicx}
\usepackage[latin1]{inputenc}
\usepackage{lscape} 
\usepackage[]{mathtools}
\usepackage{rotating}
\usepackage[normalem]{ulem}
\usepackage{xspace}
\usepackage[all,cmtip]{xy}


\usepackage[T1]{fontenc}
\usepackage{fancyhdr}
\usepackage{geometry}


\usepackage{amsthm,amsmath,amssymb,amsfonts,dsfont}
\usepackage{array}
\usepackage[ruled]{algorithm2e}
\usepackage[noend]{algpseudocode}
\usepackage{enumitem}
\usepackage{framed}
\usepackage{graphics,graphicx}
\usepackage{mathtools}
\usepackage{multirow}
\usepackage{natbib}
\usepackage{subfig}
\usepackage{url}

\usepackage{pdflscape}

\usepackage{booktabs}
\usepackage{siunitx}


\usepackage{forest}

\usepackage{tikz}
\usetikzlibrary{arrows}
\usetikzlibrary{decorations.markings}

\usepackage{tikz-qtree, tikz-qtree-compat}
\usetikzlibrary{shadows, trees}



\newtheorem{theorem}{Theorem}

\newtheorem{corollary}[theorem]{Corollary}
\newtheorem{definition}[theorem]{Definition}

\newtheorem{lemma}[theorem]{Lemma}



\DeclareOldFontCommand{\rm}{\normalfont\rmfamily}{\mathrm}
\DeclareOldFontCommand{\sf}{\normalfont\sffamily}{\mathsf}
\DeclareOldFontCommand{\tt}{\normalfont\ttfamily}{\mathtt}
\DeclareOldFontCommand{\bf}{\normalfont\bfseries}{\mathbf}
\DeclareOldFontCommand{\it}{\normalfont\itshape}{\mathit}
\DeclareOldFontCommand{\sl}{\normalfont\slshape}{\@nomath\sl}
\DeclareOldFontCommand{\sc}{\normalfont\scshape}{\@nomath\sc}




\def\R{\mathbb{R}}

\def\F{\mathbb{F}}

\DeclareMathOperator{\kernel}{Ker}
\DeclareMathOperator{\image}{Im}
\DeclareMathOperator{\ind}{\mathds{1}}

\DeclareMathOperator{\rank}{rank}

\DeclareMathOperator{\st}{\mbox{s.t.}}
\DeclareMathOperator{\supp}{supp}


\DeclareMathOperator{\iter}{iter}
\DeclareMathOperator{\maxiter}{\mbox{\tt MAX\_ITER}}

\DeclareMathOperator{\error}{error}


\DeclareMathOperator{\bd}{bd}

\DeclareMathOperator{\diam}{diam}

\DeclareMathOperator{\ball}{\mathcal{B}}
\DeclareMathOperator{\essential}{\mathsf{Ess}}

\DeclareMathOperator{\positive}{\mathsf{Pos}}
\DeclareMathOperator{\negative}{\mathsf{Neg}}
\DeclareMathOperator{\low}{low_{\partial}}

\DeclareMathOperator{\lowstar}{low^*}
\DeclareMathOperator{\leftcol}{left}

\DeclareMathOperator{\pivots}{\mathsf{Pivots}}
\DeclareMathOperator{\neigh}{\mathcal{N}}

\DeclareMathOperator{\nnz}{nnz}
\DeclareMathOperator{\paired}{\mathsf{Paired}}
\DeclareMathOperator{\lowerbound}{maxCollision}

\newcommand{\bdmap}[1]{d_{#1}}

\newcommand{\manifold}{\mathcal{M}}
\newcommand{\manif}{\mathbf{M}}

\DeclareMathOperator{\betti}{Betti}

\geometry{a4paper,tmargin=3cm,bmargin=2cm,lmargin=1.5cm,rmargin=1.5cm,headheight=2.2cm,headsep=0.5cm,footskip=1cm}
\columnsep=0.6cm

\setlength{\parindent}{0pt}

\pagestyle{fancy}
\fancyhead{} 
\fancyfoot{} 
\fancyfoot[CO,CE]{\thepage}           
\fancyfoot[RE,RO]{} 


\fancypagestyle{firstpage}{
\lhead{{\Large\bfseries\sffamily Parallel multi-scale reduction of persistent homology filtrations}\\\large Rodrigo Mendoza-Smith and Jared Tanner \\ {\tt \{mendozasmith,tanner\}@maths.ox.ac.uk}}

\rfoot{}
\lfoot{}
\cfoot{\thepage}
}

\graphicspath{{./figures/}}
\begin{document}
\thispagestyle{firstpage}

\begin{abstract}
The persistent homology pipeline includes the reduction of a,
so-called, boundary matrix.  We extend the work of
\cite{bauer2014clear, chen2011persistent} where they show how to use
dependencies in the boundary matrix to adapt the reduction algorithm presented in
\cite{edelsbrunner2002topological} in such a way as to reduce its
computational cost.  Herein we present a number of additional
dependencies in the boundary matrices and propose a novel parallel
algorithm for the reduction of boundary matrices.  In particular, we
show: that part of the reduction is immediately apparent, give bounds
on the reduction needed for remaining columns, and from these give a
framework for which the boundary reduction process can be
massively parallelised. Simulations on four synthetic examples show
that the computational burden can be conducted in approximately a
thousandth the number of iterations needed by traditional methods.
Moreover, whereas the traditional
boundary reductions reveal barcodes sequentially from a filtration
order, this approach gives an alternative method by which barcodes 
are partly revealed for multiple scales simultaneously and further
refined as the algorithm progresses; simulations show that for a
Vietoris-Rips filtration with $\sim10^4$ simplices, an estimate of
the essential simplices with 95\% precision can be computed
in {\em two iterations} and that the reduction completed to within 1\%
in about ten iterations of our algorithm as opposed to nearly
approximately eight thousand iterations for traditional
methods.
\end{abstract}

\section{Introduction}

Persistent homology is a technique within topological data analysis, see
\cite{edelsbrunner2002topological, ghrist2014elementary} 
and references therein, that estimates the topological features of a
shape in high-dimensional space from a point-cloud $S \subset \R^d$
sampled from a data manifold $\manifold \subset \R^d$. 
The topological information on the shape $\manifold$ is encoded as a set of {\em Betti numbers},
\begin{equation}
\label{eq:betti_numbers}
\betti(\manifold) := \{b_{p,r} : 0 \leq p \leq d, r \in [0, \infty)\}
\end{equation}
which geometrically represent the number of {$p$-dimensional holes} at scale $r \in [0, \infty)$ in a simplicial complex triangulation of $\manifold$.
Knowledge of the homologies persistent in a dataset can aid 
interpretation of the data, see for example
\cite{carlsson2009topology, carlsson2014topological, carlsson2008local, chan2013topology, lum2013extracting, taylor2015topological, nicolau2011topology}.
In the persistent homology paradigm, the set $\betti(\manifold)$ is approximated at multiple scales $\{r_1, \dots, r_T\} \subset [0, \infty)$ through a two-step procedure.
First, a scale-indexed filtration of simplicial complexes is constructed yielding a simplicial complex $K$ with vertex set $S$ and $m$ simplices.
This simplicial complex is represented as a $m \times m$ matrix $\partial$ defined over the Galois field of two elements $\F_2$ and having $\partial_{i,j} = 1$ iff $\sigma_i \in K$ is a face of $\sigma_j \in K$ with co-dimension $1$.
The matrix $\partial \in \F_2^{m \times m}$, together with the scale $r_{\ell}$ at which each simplex $\sigma_j \in K$ is added to the filtration, encode the necessary information to estimate $\betti(\manifold)$ via persistent homology.
The homologies in the data are then revealed in the second, and final,
step of reducing the boundary matrix $\partial$.  Reduction
algorithms for persistent homology are so named as they can 
essentially be viewed
as acting on each column of the boundary matrix to minimise the
maximum index of its nonzeros while maintaining that the column span 
of the first $j$ columns of $\partial$ remains unchanged for all $j$.
That is, following the notation of \cite{chen2011persistent}, reduction algorithms
entrywise minimise  
\begin{equation}
\label{eq:low}
\low(j) = \left\{\begin{array}{ll}
\max\{i \in [m] : \partial_{i,j} = 1\}& \mbox{if $\partial_j \neq 0$}\\
0 & \mbox{if $\partial_j = 0$,}
\end{array}\right.
\end{equation}
subject to the aforementioned span constraint; we denote the minimum
of $\low(j)$ by $\lowstar(j)$.  
Nonzero values of $\lowstar(j)$ reveal a homology persisting from
$\sigma_{\lowstar(j)}$ to $\sigma_j$.  As $\lowstar(j)$ is a property of the
simplicial complex $K$ \cite{edelsbrunner2010computational} we omit the explicit reference to the
boundary matrix in its notation; moreover, we denote by $\low$ and
$\lowstar$ the vectors of their values in $\{0, 1, \dots, m\}.$
Sec.\ \ref{subsec:simplicial_homology} and
\ref{subsec:simplicial_complex} provides further details on the 
construction of the simplicial complex filtration, the persistent
homology pipeline, and their connections with the underlying data
manifold as they pertain to our main results.

The focus of this manuscript is extending the work 
\cite{bauer2014clear, chen2011persistent} where they show how to use
dependencies in the boundary matrix to adapt the first boundary matrix
reduction algorithm \cite{edelsbrunner2002topological}, restated in
Alg.\ \ref{alg:mat_red}, in such a way as to reduce its
computational cost. 
\begin{small}
\begin{algorithm}[!htbp]
	\KwData{$\partial \in \F_2^{m \times m}$}
	\KwResult{ $\lowstar \in \mathbb{Z}_{m+1}^m$}
	\For{$j \in [m]$}{
		\While{$\exists$ $j_0 < j$ : $\low(j_0) = \low(j)$}{
			$\partial_j \leftarrow \partial_j + \partial_{j_0}$\;
		}
	}
	$\lowstar \leftarrow \low$\;
\caption{Standard reduction \cite{edelsbrunner2002topological}}
\label{alg:mat_red}
\end{algorithm}
\end{small}

Alg.\ \ref{alg:mat_red} performs the reduction by sequentially 
minimising $\low(j)$ by adding columns $j_0<j$ for which $\low(j_0) =
\low(j)$ until either there is no such column $j_0$ or column $j$ has
been set to zero.
The computational overhead in Alg.\ \ref{alg:mat_red} is in computing
the left-to-right column operations, and has a worst-case complexity
of $\mathcal{O}(m^3)$ which is achieved by an example simplicial
complex in \cite{morozov2005persistence}.  Previous
advances in reduction algorithms primarily focus on decreasing the
computational cost by exploiting structure in $\partial$ to reveal
some entries in $\partial$ can be set to zero
\cite{bauer2014clear, chen2011persistent} 
or by parallelising the reduction by dividing the boundary matrix into
blocks and partially reducing each block \cite{bauer2014clear, bauer2014distributed};
further details of these approaches are
given in Sec.\ \ref{sec:prior_art}.
The aforementioned approaches show substantially improved average
empirical operation count as compared to Alg.\ \ref{alg:mat_red}. 


%

%
%
%

%
%

Our main contribution is by noting both bounds on the values of
$\lowstar(j)$, which are empirically observed to typically identify
a large fraction of the $\lowstar(j)$, and moreover presenting a
framework by which the known $\lowstar(j)$ can be used to to reduce
$\partial$ in a highly parallel fashion.  In addition, the bounds
on $\lowstar(j)$ suggest priorities by which columns might be reduced.
The algorithm is designed to work with some of the previous speedups
as suggested in \cite{bauer2014clear}. 
Moreover, as is discussed in Sec.\ \ref{sec:main_contributions},
our parallelisation strategy induces an iterative non-local refinement procedure on $\low$ and makes the algorithm fit for early stopping, see Secs.\ \ref{sec:lowstar_convergence} and \ref{sec:essential_convergence}.
%
%
%
%
An example of
the a resulting reduction algorithm is given in Alg.\ \ref{alg:alpha_beta} whose details are explained further in Sec.\
\ref{sec:main_contributions} and numerical experiments for its application are shown in
Sec.\ \ref{sec:numerics}. Additional strategies to speed up the algorithm are given
in Secs.\ \ref{sec:optional_addon} and \ref{sec:essential_theory},
while Sec.\ \ref{sec:workload_distribution} suggests further extensions 
for computer environments with substantially fewer
processors than $m$.
%

\begin{small}
\begin{algorithm}[!htbp]
	\KwData{$\partial \in \F_2^{m \times m}$; $\maxiter \in [m]$}
	\KwResult{ $\lowstar \in \mathbb{Z}_{m+1}^m$}
  \tcp{Phase 0: Initialisation}
  Build $\beta_j$ according to \eqref{eq:beta}\;
  $\pivots \leftarrow \left\{ j \in [m] : \beta_j = \low(j) > 0\right\}$\;
  \For{$j \in \pivots$}{
    $\partial_{\low(j)} \leftarrow 0$\;
  }
  $\iter \leftarrow 0$\;
  \While{$\low \neq \lowstar$ or $\iter \leq \maxiter$}{
    \tcp{Phase I: Local injections}
    \For{$d \in [\dim(K)]$}{
      $\lowerbound \leftarrow 0$\;
      \For{$j \in \left\{\ell \in K_d: \low(\ell) > 0 \right\} \setminus \pivots$}{
        \If{$\low(j) > \lowerbound$}{
          \eIf{$\low(j) \notin \low([j-1])$}{
              $\pivots \leftarrow \pivots \cup \left\{ j\right\}$\;
            }{
              $\lowerbound \leftarrow \low(j)$\;
            }
          }
      }
    }
    \tcp{Phase II: Column reduction}
    \For{$j_0 \in \pivots$}{ 
      $\mathcal{N}(j_0) \leftarrow \left\{ \ell \in [m]\setminus[j_0] : \low(\ell) = \low(j_0)\right\}$\;
      \For{$j \in \mathcal{N}(j_0)$}{
        $\partial_j \leftarrow \partial_j + \partial_{j_0}$\;
        \If{$\low(j) = \beta_j$}{
          $\pivots \leftarrow \pivots \cup \left\{ j\right\}$\;
          $\partial_{\low(j)} \leftarrow 0$\;
        }
      }
    }
    \tcp{Increase iteration}
    $\iter \leftarrow \iter + 1$\;
  }
  $\lowstar \leftarrow \low$\;
\caption{Parallel multi-scale reduction}
\label{alg:alpha_beta}
\end{algorithm}
\end{small}

\section{Background}
\label{sec:background}

In this section we give an overview of the pipeline for computing
persistent homology which includes a short introduction to the
topological and algebraic results on which our results are based, and
a review of the prior art in boundary matrix reduction algorithms.
In what follows we adopt the following notation. If $S$ is a set, we let $2^S$ denote the power set of $S$ and $|S|$ be the cardinality of $S$.
We also borrow notation from Combinatorics and let $[m]:= \{1, \dots, m\}$ for $m \in \mathbb{N}$.
We will use $\mathbb{Z}_{m+1}$ as a shorthand for $[m] \cup \{0\}$.
For a function $f: A \rightarrow B$, we let $f(A) = \left\{f(a) \in B: a \in A\right\}$ and $f(\emptyset) = \emptyset$.
If $\partial \in \F_2^{m \times m}$ is a matrix, we let $\partial_j \in \F_2^m$ be its $j$-th column and $\partial_{i, \cdot}$ be its $i$-th row.
The {\em support} of an $m$-dimensional vector $v$ is defined as $\supp(v) = \left\{ i \in [m]: v_i \neq 0\right\}$.
If $ \partial \in \F_2^{m \times m}$ is a boundary matrix, we let $\nnz(\partial)$ or the number of nonzeros entries in $\partial$.
We reserve the notation $\ind\left\{\cdot\right\}$ for indicator functions that return $1$ if the argument is true and $0$ otherwise.
If $\R^d$ is the $d$-dimensional Euclidean space with $p$-norm $\|\cdot\|_p$ and $S \subset \R^d$, we let $\diam(S):=\max_{x, y \in S}\|x- y\|$ be the {\em diameter} of $S$.
Finally, for any $c \in \R^d$ and $r \geq 0$, we let $\ball_r(c):= \left\{ x \in \R^d: \|x - c\|_p \leq r\right\}$ be the $d$-dimensional $p$-ball of radius $r$ centred at $c$.

\subsection{Simplicial homology}
\label{subsec:simplicial_homology}

If $S$ is a finite set and $\alpha \subset \sigma \subset S$, then $\sigma$ is a {\em simplex} of $S$ and $\alpha$ is a {\em face} of $\sigma$.
A set of simplices $K \subset 2^S$ is a {\em simplicial complex} if $\sigma \in K$ implies that every face of $\sigma$ is also in $K$.
The dimension of the simplex is defined as $\dim(\sigma) = |\sigma| - 1$.
The dimension of a simplicial complex $K$ will be defined as $\dim(K) = \max \left\{\dim(\sigma) : \sigma \in K\right\}$.
It will be convenient to assume that the simplices in $K$ are indexed, so that $K = \left\{\sigma_1, \dots, \sigma_{|K|}\right\}$.
In this case, the set of {\em $p$-simplices} is the set of simplices in $K$ which are indexed by
\begin{equation}\label{eq:Kp}
K_p = \left\{ j \in [|K|]: \dim(\sigma_j) = p\right\}.
\end{equation}
The boundary of a simplex $\sigma$ is its set of faces of co-dimension
one; symbolically we denote this by $\bd(\sigma) = \{ \alpha : \dim(\alpha)=\dim(\sigma)-1\}$.

In the subsequent paragraphs we describe, for completeness, the
algebraic structure of simplicial complexes and how it gives rise to
the homology groups; omitting this paragraph does not limit one's
ability to understand the new algorithms proposed.
The set of {\em $p$-chains} $\mathsf{C}_p(K):= 2^{K_p}$ equipped with the symmetric difference operation is an abelian group with neutral element $\emptyset$.
For each $p \in \mathbb{N}$, the group $\mathsf{C}_p(K)$ is related to $\mathsf{C}_{p-1}(K)$ by a {\em boundary map} $\bdmap{p}: \mathsf{C}_p \rightarrow \mathsf{C}_{p-1}$ defined by $\bdmap{p}(c) = \sum_{\sigma \in c} \bd(\sigma)$.
The kernel and image of $\bdmap{p}$ are geometrically meaningful.
Elements in $\mathsf{Z}_p = \kernel \bdmap{p}$ are called {\em $p$-cycles}, while elements in $\mathsf{B}_p = \image \bdmap{p+1}$ are called {\em $p$-boundaries}.
Moreover,  for all $p$ and for all $c \in \mathsf{C}_p$, $\bdmap{p-1} \circ \bdmap{p}(c) = \emptyset$, so $\mathsf{B}_p \subset \mathsf{Z}_p \subset \mathsf{C}_p$, see \cite{edelsbrunner2010computational}.
The quotient space $\mathsf{H}_p = {\mathsf{Z}_p}/{\mathsf{B}_p}$ is called the {\em $p$-th homology group} and its elements are called {\em homology classes}.
The $p$-th {\em Betti number} is defined as $b_p = \rank \mathsf{H}_p$ and counts the number of $p$-dimensional holes of the simplicial complex $K$.

\subsection{Construction of the simplicial complex}
\label{subsec:simplicial_complex}


In persistent homology, a simplicial complex $K$ is built from a point-cloud $S \subset \R^d$ sampled from a data manifold $\manifold \subset \R^d$.
Given $S$, the goal of persistent homology is to estimate the relevant homological features of $\manifold$ at all scales $r \in [0,\infty)$.
To do so, a simplicial complex triangulation is computed from $S$ at all scales $ r \in \{r_1, \dots, r_T\} \subset [0, \infty)$ by letting $K = \emptyset$ and adding a simplex $\sigma \subset S$ to $K$ whenever the points in $\sigma$ are sufficiently close to each other.
The notion of closeness is implied by the relevant scale parameter, and is assessed for each scale $r$ via a monotonic function $f_r: 2^S \rightarrow \{0,1\}$ inducing a filtration
\begin{equation}
\label{eq:filtration}
\manif_0 \subset \manif_1 \subset \cdots \subset \manif_T = K
\end{equation}
of simplicial complexes $\manif_i = \left\{ \sigma \in 2^S : f_{r_i}(\sigma) = 1\right\}$.
For example, the function
\[
f_r(\sigma) = \ind{\left\{\bigcap_{x_0 \in \sigma} \ball_r(x_0)\right\}},
\]
generates the {\em \u{C}ech complex} filtration, while the function
\[
f_r(\sigma) = \ind\left\{\diam(\sigma) \leq 2r\right\}
\]
generates the {\em Vietoris-Rips complex} filtration.
We shall assume that the filtration \eqref{eq:filtration} has $m$ elements and that the largest set in the filtration is a simplicial complex with simplices given by
\[
K = \left\{\sigma_1, \dots, \sigma_m\right\}.
\]
It is further assumed that the simplices in $K$ are indexed according to a {\em compatible ordering}, meaning that the simplices in $\manif_\ell$ always precede the ones in $K \setminus \manif_\ell$, and that the faces of any given simplex always precede the simplex.

Persistent homology tracks how the homology of the filtration changes at each scale $r_t$ or, equivalently, as new simplices are added to the filtration.
%
%
Indeed, when adding simplex $\sigma_i$ at scale $r_t$, the homology of $\manif_t$ can change in one of two possible ways \cite{edelsbrunner2010computational}.
\begin{enumerate}
\item A class of dimension $\dim(\sigma_i)$ is created. In this case, $\sigma_i$ is a {\em positive} simplex.
\item A class of dimension $\dim(\sigma_{i})-1$ is destroyed. In this case, $\sigma_i$ is a {\em negative} simplex.
\end{enumerate}
If $\sigma_j$ is a negative simplex, then it destroys the class created by a positive simplex $\sigma_i$ with $i = \lowstar(j) <j$, see Lemma \ref{lemma:paired_simplices}.
This observation induces a natural pairing $(\sigma_i, \sigma_j)$ between a negative simplex $\sigma_j$ and the positive simplex $\sigma_i$ it destroys.
Moreover, it allows us to quantify the lifetime of a particular
homology class in the filtration via its {\em homology persistence}
which is the difference between $r_t$ for $\sigma_j$ and $\sigma_i$.

When $r_T$ is sufficiently small, we might find that some simplices are never destroyed; these simplices represent the homology classes that are {\em persistent} in the filtration up to scale $r_T$, and we called them {\em essential}.
The persistence pairs are computed by representing the filtration \eqref{eq:filtration} as a boundary matrix $\partial \in \F_2^{m \times m}$ and applying the {\em reduction algorithm} \cite{edelsbrunner2002topological} to it.
We discuss this process in Sec.\ \ref{subsec:boundary_matrix}
%

\subsection{Boundary matrix reduction}
\label{subsec:boundary_matrix}

If $K = \{\sigma_1, \dots, \sigma_m\}$ is a simplicial complex
constructed as in the previous Sec.\ \ref{subsec:simplicial_complex},
it can be represented with a boundary matrix $\partial \in \F_2^{m
  \times m}$ defined by 
\begin{equation}
\label{def:boundary_matrix}
\partial_{i,j} = \left\{\begin{array}{ll}
1 & \mbox{$\sigma_i$ is a face of $\sigma_j$ of co-dimension 1}\\
0 & \mbox{otherwise}
\end{array}\right.
\end{equation}

The boundary matrix $\partial$ is sparse, binary, and upper-triangular and has associated a function $\low: [m] \rightarrow \mathbb{Z}_{m + 1}$ defined as in \eqref{eq:low}.
The matrix $\partial$ will be said to be {\em reduced} when $\low \in \mathbb{Z}_{m+1}^m$ is an injection over its support, $i.e.$ when $\low(j_1) = \low(j_2) > 0$ implies that $j_1 = j_2$.
We use the notation $\partial^*$ to denote a matrix $\partial$ with injective $\low$ and remark that even though $\partial$ can have several reductions, the injection is a property of the complex $K$ and does not depend on any particular reduction $\partial^*$, see \cite[p.183]{edelsbrunner2010computational}.
When there is no risk of confusion, we let $\lowstar$ be the injection of the boundary matrix under consideration.

The vector $\lowstar$ reveals information about the pairings by virtue of the following Lemma.
\begin{lemma}[Pairing \cite{edelsbrunner2002topological}]
\label{lemma:paired_simplices}
If $\sigma_j$ is a negative simplex, then $\sigma_{\lowstar(j)}$ is a positive simplex.
\end{lemma}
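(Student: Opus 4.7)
The plan is to exploit the fact that after any reduction we can write $R = \partial V$ for an upper-triangular, invertible $V \in \F_2^{m\times m}$, and then to argue directly from $\partial^2 = 0$ together with the filtration structure of $K$. Throughout, recall that a simplex $\sigma_i$ of dimension $p$ is positive iff adding it to $\manif_{i-1}$ enlarges the space of $p$-cycles, and negative iff it instead shrinks the space of $(p-1)$-cycles modulo boundaries (equivalently, iff $\partial\sigma_i$ represents a nontrivial class in $\mathsf{H}_{p-1}(\manif_{i-1})$).

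First I would set up the column-level content of the reduction. Let $p = \dim \sigma_j$, so that $R_j$ is a $(p-1)$-chain supported on $\{\sigma_1,\dots,\sigma_{j-1}\}$, and write $R_j = \partial\bigl(\sigma_j + \sum_{k<j} v_{k,j}\sigma_k\bigr)$. Since $\sigma_j$ is negative, $R_j \neq 0$, so $i := \lowstar(j) > 0$ and $\sigma_i$ is a $(p-1)$-simplex appearing in the support of $R_j$. Using $\partial^2 = 0$ I would then observe that $\partial R_j = 0$, so $R_j \in \mathsf{Z}_{p-1}$. Because $\lowpos(R_j) = i$, the support of $R_j$ lies inside $\{\sigma_1,\dots,\sigma_i\}$, hence $R_j \in \mathsf{Z}_{p-1}(\manif_i)$; and because $\sigma_i$ itself appears in $R_j$, the chain $R_j$ is not even a chain in $\manif_{i-1}$, hence $R_j \notin \mathsf{Z}_{p-1}(\manif_{i-1})$.

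From here the contradiction is immediate: the step $\manif_{i-1} \hookrightarrow \manif_i$ strictly enlarges $\mathsf{Z}_{p-1}$, so $\sigma_i$ must create a $(p-1)$-cycle. By the dichotomy recalled above, this means $\sigma_i$ is positive, which is exactly the statement of the lemma. To make this dichotomy rigorous I would, if needed, briefly justify that if $\sigma_i$ were negative then $\mathsf{Z}_{p-1}(\manif_i) = \mathsf{Z}_{p-1}(\manif_{i-1})$ (since a negative $(p-1)$-simplex only destroys a $(p-2)$-class and so cannot create any new $(p-1)$-cycle), which is the only place where the two cases of positive/negative are actually pitted against each other.

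The step that needs the most care is explaining why $R_j$ really represents a new cycle at time $i$ rather than something that could already have been expressed in $\manif_{i-1}$: the key point is that $\lowpos(R_j) = i$ gives both $R_j \in \mathsf{Z}_{p-1}(\manif_i)$ and $R_j \notin \mathsf{Z}_{p-1}(\manif_{i-1})$ simultaneously, and without $\partial^2 = 0$ this observation would only give that $R_j$ is a chain, not a cycle. Once this is pinned down, there is nothing further to do; the proof relies only on properties of $R = \partial V$, on $\partial^2 = 0$, and on the filtration ordering, and avoids appeal to the injectivity of $\lowstar$ (which is what Lemma~\ref{lemma:paired_simplices} itself is typically used to establish).
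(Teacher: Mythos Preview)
The paper does not supply its own proof of this lemma: it is stated with a citation to \cite{edelsbrunner2002topological} and used as a black box throughout. So there is no in-paper argument to compare your proposal against.

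That said, your argument is correct and is essentially the classical proof (as in Edelsbrunner--Harer). Two small points worth tightening. First, the paper's filtration $\manif_0\subset\cdots\subset\manif_T$ is indexed by scales, not by individual simplices; your $\manif_{i-1}$ and $\manif_i$ implicitly refer to the one-simplex-at-a-time refinement $\{\sigma_1,\dots,\sigma_{i-1}\}\subset\{\sigma_1,\dots,\sigma_i\}$, which is fine but should be said explicitly. Second, the step ``since $\sigma_j$ is negative, $R_j\neq 0$'' is true but not immediate from your stated definitions: if $R_j=0$ then $\sigma_j+\sum_{k<j}v_{k,j}\sigma_k$ is a $p$-cycle in $\{\sigma_1,\dots,\sigma_j\}$ that genuinely involves $\sigma_j$, so $\mathsf{Z}_p$ grows and $\sigma_j$ would be positive. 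One sentence to that effect closes the loop. The rest of the argument---$\partial R_j=0$ from $\partial^2=0$, the support bound from $\low(R_j)=i$, and the dichotomy that a negative $(p{-}1)$-simplex cannot enlarge $\mathsf{Z}_{p-1}$---is exactly right.
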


Hence, $\lowstar(\cdot)$ partitions the simplices in $K$ as,
\begin{align*}
\positive &= \{j\in[m] : \lowstar(j) = 0\}\\
\negative &= \{j \in [m] : \lowstar(j) > 0\}\\
\essential &= \{j \in \positive : \nexists \;k\; \st\; \lowstar(k) = j\}
\end{align*}
so $\positive \cap \negative = \emptyset$ and $\essential\subset\positive$.
It will also be convenient to define the set $\paired$ as the union of
the set of negative and associated positive simplices
\begin{equation}
\label{eq:paired}
\paired = \left\{ j \in [m] : \lowstar(j) \in [m]\right\} \cup \left\{ \lowstar(j) \in [m] : j \in [m]\right\}
\end{equation}
The technology used to reduce $\partial$ is known as the {\em reduction algorithm} (Alg.\ \ref{alg:mat_red}) and was first presented in \cite{edelsbrunner2002topological}.
The convergence guarantees of Alg.\ \ref{alg:mat_red} are given in Theorem \ref{th:convergence_matred}.
\begin{theorem}[Convergence of Alg.\ \ref{alg:mat_red} \cite{edelsbrunner2002topological}]
\label{th:convergence_matred}
Alg.\ \ref{alg:mat_red} converges for any boundary matrix $\partial \in \F_2^{m\times m}$ in at most $\mathcal{O}(m^3)$ operations.
\end{theorem}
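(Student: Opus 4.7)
The plan is to prove two things: (i) the algorithm terminates on any input, and (ii) the total work is bounded by $\mathcal{O}(m^3)$ operations. Both will follow from a monotonicity argument on $\low(j)$ inside the while loop, together with the observation that each elementary column operation costs at most $\mathcal{O}(m)$ work.

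First I would analyse one iteration of the inner while loop. Suppose during processing column $j$ we find $j_0 < j$ with $\low(j_0) = \low(j) = i > 0$. By the definition of $\low$, both $\partial_j$ and $\partial_{j_0}$ have a $1$ in row $i$ and $0$ in every row strictly greater than $i$. Since arithmetic is over $\F_2$, the update $\partial_j \leftarrow \partial_j + \partial_{j_0}$ cancels the entry in row $i$ and cannot create any new nonzero below row $i$. Hence the value of $\low(j)$ after the update is strictly smaller than $i$ (or becomes $0$ if the column is annihilated). This is the only non-routine step of the proof: the invariant that the two columns agree on the pivot row and have no support below it is precisely what forces strict decrease.

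Next I would use this monotonicity to bound the number of iterations. Because $\partial$ is upper-triangular, $\low(j) \in \{0,1,\dots,j\} \subset \mathbb{Z}_{m+1}$, and since $\low(j)$ strictly decreases at each pass through the while loop, the loop executes at most $j \leq m$ times for any fixed $j$. Each column addition $\partial_j \leftarrow \partial_j + \partial_{j_0}$ touches at most $m$ entries and therefore costs $\mathcal{O}(m)$. Summing over the $m$ outer iterations gives the claimed bound
\[
\sum_{j=1}^{m} \mathcal{O}(m) \cdot \mathcal{O}(m) \;=\; \mathcal{O}(m^3).
\]

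Finally I would sketch correctness, i.e. that at termination $\low = \lowstar$ in the sense of an injection on its support. The outer loop processes columns left-to-right, and its exit condition for column $j$ is precisely that no $j_0 < j$ satisfies $\low(j_0) = \low(j) > 0$. Combining this with the invariant that processing $\partial_j$ only adds earlier columns (so the column span of $\partial_{[j]}$ is preserved), we obtain an injective $\low$ on $[m]$ at the end; by the uniqueness statement cited from \cite[p.\,183]{edelsbrunner2010computational}, this injection coincides with $\lowstar$, completing the proof.
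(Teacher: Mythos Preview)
Your argument is correct and follows the standard reasoning for this result. Note, however, that the paper does not actually supply a proof of Theorem~\ref{th:convergence_matred}: it merely states the result and attributes it to \cite{edelsbrunner2002topological}. So there is no ``paper's own proof'' to compare against; your write-up is essentially the textbook proof one would find in the cited source or in \cite{edelsbrunner2010computational}, based on strict decrease of $\low(j)$ under each column addition and the $\mathcal{O}(m)$ cost per addition.
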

We survey the prior art in boundary matrix reduction algorithms in the
subsequent Sec.\ \ref{sec:prior_art}.

\subsection{Boundary matrix algorithm prior art}
\label{sec:prior_art}

Let $K$ be the simplicial complex corresponding to a filtration over a grid $\{r_1, \dots, r_T\}$ and point-cloud $S \subset \R^d$.
The number of simplices in $K$ is of order $\Omega(2^{|S|})$ as $r_T\rightarrow \diam(S)$, which turns the process unfeasible even for moderately large point-clouds $S$ and scales $r_T$.
Hence, though Alg.\ \ref{alg:mat_red} terminates in $\mathcal{O}(m^3)$ steps, we may find that $m = \mathcal{O}(2^{|S|})$ making the reduction unfeasible.
The main computational overhead when reducing a boundary matrix is in performing left-to-right column operations, so many reduction algorithms have implemented strategies that cut the number of column operations required to fully reduce the matrix.
One such strategy was given in \cite{chen2011persistent,
  bauer2014clear} by reducing columns in blocks of decreasing
dimension and observing that, by Lemma \ref{lemma:paired_simplices},
column $\lowstar(j)$ can be set to zero whenever column $j$ is
reduced.  As $\dim(\sigma_{\lowstar(j)})$ is of one lower dimension
that $\dim(\sigma_j)$, clearing results in all positive non-essential
  columns of dimension less than $\dim(K)$ being set to zero without
  zeroing them through column additions.
Setting a positive column to zero when its corresponding negative pair has been found is often called {\em clearing}, so we adopt this terminology.
Alg.\ \ref{alg:alpha_beta} heavily relies on this clearing strategy so we sketch the pseudocode of \cite{chen2011persistent} in Alg.\ \ref{alg:twist}.
\begin{small}
\begin{algorithm}
	\KwData{$\partial \in \F_2^{m \times m}$}
	\KwResult{ $\lowstar \in \mathbb{Z}_m^{m+1}$}
	\For{$d \in \{\dim K, \dim K - 1, \dots, 1\}$}{
		\For{$j \in K_d$}{
			\While{$\exists j_0 < j : \low(j_0) = \low(j)$}{
				$\partial_j \leftarrow \partial_j + \partial_{j_0}$\;
			}
			\If{$\partial_j \neq 0$}{
				$\partial_{\low(j)} \leftarrow 0$\;
			}
		}
	}
	$\lowstar \leftarrow \low$\;
\caption{Standard reduction with a twist \cite{chen2011persistent}}
\label{alg:twist}
\end{algorithm}
\end{small}

Another strategy to reduce the complexity of Alg.\ \ref{alg:mat_red} is called {\em compression} \citep{bauer2014clear} and consists in deriving analytical guarantees to nullify nonzeros in the boundary matrix without affecting the pairing of the simplices.
This has the objective of saving arithmetic operations and hence reducing the flop count.
However for very large values of $m$, which are typical for large
filtration values where $m$ grows exponentially with $|S|$, it is
necessary to also parallelise these approaches in order to be scalable.
%
%
The idea of distributing the workload of the reduction algorithm has
already been explored in \citep{bauer2014clear, bauer2014distributed}
where the matrix is partitioned into $b$ blocks of contiguous columns
to be independently reduced in a shared-memory or distributed system.
The numerical simulations in \cite{bauer2014distributed} show that these strategies can indeed bring substantial speed-ups when implemented on a cluster, but also requires the provision of the parameter $b$ as well as a number of design choices for a practical implementation.

Apart from parallelisation, a number of other approaches have been proposed.
For instance, \cite{milosavljevic2011zigzag} adapted the Coppersmith-Winograd algorithm \cite{coppersmith1990matrix} to guarantee reduction in time $\mathcal{O}(m^{2.3755})$.
A set of sequential algorithms that exploits duality of vector spaces was given in \cite{de2011dualities, de2011persistent}, but as pointed out in \cite{otter2015roadmap} is only known to give speed-ups when applied to Vietoris-Rips complexes.
%
%
Finally, \cite{sheehy2014persistent} projects $\partial$ to a low-dimensional space while controlling the error between the resulting barcodes, but doing so in practice can be as costly as reducing the matrix in its ambient space.

\section{Main contributions}
\label{sec:main_contributions}

In this section we describe the theory behind the parallelisation strategy of Alg.\ \ref{alg:alpha_beta}.
The main algorithmic innovation is a strategy to efficiently
distribute the workload of Alg.\ \ref{alg:mat_red} over
$\mathcal{O}(m)$ processors to progressively entrywise minimize $\low
\in \mathbb{Z}_{m+1}^m$. 
Additionally, Alg.\ \ref{alg:alpha_beta} is designed to take advantage of some structural patterns in the boundary matrix in order to minimise the total number of left-to-right column operations.
The core observation is that, by simple inspection, the rows of the
boundary matrix reveal a subset of nonzero entries in $\partial$ which
cannot be modified in the reduction process, and consequently identify
nonzero lower bounds on $\lowstar(j)$ for a subset of $j\in [m]$.
This observation is captured by Definition \ref{def:left} and Definition \ref{def:beta}.

\begin{definition}[$\leftcol(\cdot)$]
\label{def:left}
Let $\partial \in \F_2^{m \times m}$ be a boundary matrix. The $\leftcol$ function is defined as
\begin{equation}
\label{eq:left}
\leftcol(i) = \left\{\begin{array}{ll}
\min \left\{ j \in [m] : \partial_{i,j} = 1\right\} & \partial_{i,\cdot}\neq 0\\
0 & \partial_{i, \cdot} = 0
\end{array}\right.
\end{equation}
\end{definition}

\begin{definition}[$\beta_j$]
\label{def:beta}
Let $\partial \in \F_2^{m \times m}$ be a boundary matrix and
\begin{equation}
\label{eq:leftcolset}
\mathcal{L}_j = \{i \in [m] : \leftcol(i) = j\}.
\end{equation}
Then, for $j \in [m]$ we let
\begin{equation}
\label{eq:beta}
\beta_j = \left\{\begin{array}{ll}
\max \mathcal{L}_j & \mathcal{L}_j \neq \emptyset\\
0 & \mathcal{L}_j = \emptyset
\end{array}
\right.
\end{equation}
\end{definition}
The vector $\beta \in \mathbb{Z}_{m+1}^m$ carries a great deal of information about the nature of each column in the boundary matrix.
Some of its properties are explored in Theorem \ref{th:beta_properties}.

\begin{theorem}[Properties of $\beta_j$]
\label{th:beta_properties}
Let $\partial \in \F_2^{m \times m}$ be a boundary matrix and $\beta$ defined as in \eqref{eq:beta}.
Then, the following statements hold.
\begin{enumerate}[label=\text{P.\arabic*}]
\item \label{it:a} $\beta_j = 0 \Leftrightarrow \nexists i \in [m] \st \leftcol(i) = j$
\item \label{it:b}$\beta_j$ is invariant to left-to-right column operations.
\item \label{it:c} $\{j \in [m] : \beta_j > 0\} \subset \negative$.
\item \label{it:e} $\positive \subset \{j \in [m] : \beta_j = 0\}$.
\item \label{it:d} $\beta_j \leq \lowstar(j) \leq \low(j)$ for all $j \in [m]$.
\item \label{it:f} $\beta$ can be computed in time $\mathcal{O}(\nnz(\partial))$.
\end{enumerate}
\end{theorem}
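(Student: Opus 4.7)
The plan is to establish the six properties in an order that lets each build on the previous ones, rather than proving them independently. Property \ref{it:a} is a direct restatement of \eqref{eq:beta}, so I would dispatch it in one sentence: $\beta_j = 0$ iff $\mathcal{L}_j = \emptyset$ iff there is no $i$ with $\leftcol(i) = j$.

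The main work is \ref{it:b}, and I expect this to be the key obstacle. I would fix a single left-to-right operation $\partial_j \leftarrow \partial_j + \partial_{j_0}$ with $j_0 < j$ and show that $\leftcol(i)$ is unchanged for every row $i$; closure under repeated operations is then immediate. Only column $j$ is modified, and for each row $i$ I would do a short case split on where $\leftcol(i)$ lies relative to $j_0$ and $j$. The nontrivial cases are $\leftcol(i) = j$ (then $\partial_{i,j_0}=0$ because $j_0 < j = \leftcol(i)$, so the xor leaves $\partial_{i,j}=1$), and $\leftcol(i) > j$ (then both $\partial_{i,j}$ and $\partial_{i,j_0}$ vanish, so $\partial_{i,j}$ remains $0$). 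In every case $\leftcol(i)$ is preserved, so the sets $\mathcal{L}_j$ and hence $\beta_j$ are invariant.

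With \ref{it:b} in hand, I would tackle \ref{it:d} next, since \ref{it:c} and \ref{it:e} will then drop out. The upper bound $\lowstar(j) \leq \low(j)$ follows because, in Alg.\ \ref{alg:mat_red}, every inner iteration strictly decreases $\low(j)$ until termination. For the lower bound $\beta_j \leq \lowstar(j)$, I would apply \ref{it:b} to the fully reduced matrix $\partial^*$: the value $\beta_j$ computed from $\partial^*$ equals the original $\beta_j$, and if it is positive then there exists an index $i^* = \beta_j$ with $\leftcol^*(i^*)=j$, forcing $\partial^*_{i^*,j}=1$ and therefore $\lowstar(j) = \low^*(j) \geq i^* = \beta_j$. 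When $\beta_j=0$ the inequality is trivial. Then \ref{it:c} follows since $\beta_j > 0$ implies $\lowstar(j) \geq \beta_j > 0$, and \ref{it:e} is its contrapositive: $\lowstar(j)=0$ forces $\beta_j=0$.

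Finally, for \ref{it:f} I would describe a single sweep that computes $\leftcol$ and $\beta$ simultaneously. Iterate $j = 1, \dots, m$ and, for each nonzero entry $\partial_{i,j}$, check whether row $i$ has been visited before; if not, record $\leftcol(i) \leftarrow j$ and update $\beta_j \leftarrow \max(\beta_j, i)$. Each nonzero is touched once and each update is $O(1)$, giving total work $O(\nnz(\partial))$ plus $O(m)$ for initialisation, which is $O(\nnz(\partial))$ under the standing assumption that every column of $\partial$ is visited. This completes the program.
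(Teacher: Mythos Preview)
Your proposal is correct and follows essentially the same approach as the paper. The only notable difference is organisational: you establish \ref{it:d} first and then read off \ref{it:c} and \ref{it:e} as immediate corollaries of $\beta_j \leq \lowstar(j)$, whereas the paper proves \ref{it:c} directly (arguing that the entry $\partial_{\beta_j,j}$ cannot be zeroed, hence $j\in\negative$) and only then proves \ref{it:d}; your ordering is slightly more economical but the underlying invariance argument via \ref{it:b} is identical.
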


\begin{proof}

$\;$

\begin{enumerate}[label=\text{P.\arabic*}]
\item This follows directly from the definition of $\beta_j$.
\item Let $i \in [m]$ be a row of $\partial$ and $j = \leftcol(i)$.
  Reduction of $\partial_j$ is achieved by adding to $\partial_j$
  columns $\partial_{\ell}$ for $\ell<j$  By Definition \ref{eq:leftcolset}
  $\partial_{i,\ell}=0$ for $\ell<j$ and hence $\partial_{i,j}=1$
  throughout the reduction.  Consequently the set $\leftcol(i)$
  and values $\beta_j$ are fixed in the reduction procedure.
\item 
If $\beta_j > 0$, then exists $i \in [m]$ such that $\leftcol(i) = j >
0$. Then, $\partial_{i,j} = 1$ and there does not exist
$\partial_\ell$ with $\ell < j$ such that $\partial_{i, \ell} =
1$. Hence $\partial_{i,j}$ can not be set to zero in the reduction, 
so $j \in \negative$.
\item
This result follows by taking complements on the result \ref{it:c}.
\item 
By \ref{it:b} the set $\mathcal{L}_j$ in \ref{eq:leftcolset} are
nonzero entries in $\partial_j$ which cannot be modified in the
reduction procedure and consequently $\lowstar(j)\ge\beta_j$.

%
\[
\lowstar(j) \geq \max \{i \in \supp(\partial_j) : \leftcol(i) = j\} = \beta_j
\]
\item Note that when creating the boundary matrix, $\beta$ can be constructed with the following procedure,
\begin{small}
\begin{algorithm}[!htbp]
	\KwData{Simplicial complex $K = \left\{\sigma_1, \dots, \sigma_m\right\}$}
	\KwResult{ $\beta_j$ for $j \in [m]$}
    \For{$\sigma_j \in K$}{
      $\beta_j \leftarrow 0$\;
      \For{$\sigma_i \in \bd(\sigma_j)$}{
        \If{$\sigma_i$ has not been visited}{
          $\beta_j \leftarrow \max(\beta_j, i)$\;
          Mark $\sigma_i$ as visited\;
        }
      }
    }
\caption{Building $\beta$}
\label{alg:beta_building}
\end{algorithm}
\end{small}

Alg.\ \ref{alg:beta_building} finishes in time proportional to $\sum_j |\bd(\sigma_j)|$, so it has complexity $\mathcal{O}\left(\nnz(\partial)\right)$.

\end{enumerate}
\end{proof}

The value of Theorem \ref{th:beta_properties} is most immediately
apparent in \ref{it:e} and in particular when $\low(j)=\beta_j$ in
which case $\lowstar(j)$ is determined.  
%
Our numerical experiments in Sec.\ \ref{sec:numerics} show
empirically that a large number of columns can usually be identified
as already being reduced without need of any column operations.
Moreover, having access to a large number of reduced columns allows a
massive parallelisation as described in Sec.\ \ref{subsec:parallel} and
illustrated in Sec.\ \ref{sec:numerics}.
%
While the calculation of $\beta$ adds an initial computation to Alg.\ \ref{alg:alpha_beta}, it can be computed at the matrix-reading cost of $\mathcal{O}(\nnz(\partial))$, so it can often be obtained as a by-product of creating the filtration without penalising the asymptotic computational complexity of constructing the matrix.

Additionally, the vector $\beta$ gives a sufficient condition for a
column to be in $\negative$.
Knowledge that an unreduced column is necessarily negative can be used
in numerous ways.  For example, as discussed in sections
\ref{sec:optional_addon}, \ref{sec:essential_theory}, and
\ref{sec:workload_distribution} this
is useful to produce more reliable estimations of $\lowstar$ in the
case of early-stopping.   Additionally, knowledge that a column is
necessarily negative enhances a notion of clearing as discussed in
Sec.\ \ref{sec:optional_addon}.

Apart from Theorem \ref{th:beta_properties}, Alg.\ \ref{alg:alpha_beta} makes use of the observation that since $\low$ converges to an injection $\lowstar$, one can often inspect the sparsity pattern of $\partial$ to identify regions of $[m]$ where $\low$ is locally an injection.
This local injection property is the basis of the following Theorems
\ref{th:local_injection_i} and \ref{th:local_injection_ii}, which give sufficient
conditions to identify sets $T\subset [m]$ such that $\low(j) =
\lowstar(j)$ for all $j \in T$.
In the remainder of this section it will be convenient to extend some
of our prior notation as follows: for a set $T \subset [m]$, we let
$\low(T) = \left\{ \low(j) : j \in T\right\}$ and $|\low(T)|$ be its
cardinality. 
Informally, the argument in Theorem \ref{th:local_injection_i} is that, for each dimension $d$, any subset of contiguous columns $T \subset K_d \setminus \{j \in K_d : \low(j) = 0\}$ with $\min T = \min K_d$ is already reduced if $\low: T \rightarrow [m]$ is an injection or, equivalently, if $|\low(T)| = |T|$.

\begin{theorem}[Local injection I]
\label{th:local_injection_i}
Let $\partial \in \F_2^{m \times m}$ be the boundary matrix of a
simplicial complex $K$ and let $d \in [\dim(K)]$ and $K_d$ defined as
in \eqref{eq:Kp}.
Let
%
\begin{equation*}
T := K_d \setminus \left\{ j \in K_d : \low(j) = 0\right\} \neq \emptyset
\end{equation*}
be such that $T = \left\{j_1, \dots, j_{|T|}\right\}$ for $j_1 < \cdots < j_{|T|}$ and for $\ell \leq |T|$, let $T^{\ell} = \left\{j_1, \dots, j_{\ell}\right\}$.
If
\begin{equation}
\label{eq:injection_1}
\left|\low\left(T^{\ell}\right)\right| = \ell,
\end{equation}
then $\low(j) = \lowstar(j)$ for all $j \in T^{\ell}$.
\end{theorem}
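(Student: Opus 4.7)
My plan is to prove the statement by strong induction on $\ell$, using the standard reduction algorithm (Alg.\ \ref{alg:mat_red}) as the vehicle to expose $\lowstar$. The key observation I will lean on is a \emph{dimension invariant}: if $\sigma_{j_0} \in K_{d'}$, then every nonzero of $\partial_{j_0}$ lies in rows indexed by $K_{d'-1}$, and since any column operation $\partial_{j_0}\leftarrow\partial_{j_0}+\partial_{j_0'}$ in Alg.\ \ref{alg:mat_red} requires $\low(j_0')=\low(j_0)\in K_{d'-1}$, the added column $\partial_{j_0'}$ must itself be indexed by some $\sigma_{j_0'}\in K_{d'}$. Inductively through the reduction this means $\lowstar(j_0)\in K_{d'-1}$ whenever $\lowstar(j_0)>0$. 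I would state and prove this as a short preliminary lemma, since it is the main structural fact the argument rests on.

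For the base case $\ell=1$, I would suppose for contradiction that $\lowstar(j_1)<\low(j_1)$. Then, running the standard reduction from left to right, processing column $j_1$ must require some $j_0<j_1$ with $\lowstar(j_0)=\low(j_1)$, where $\low(j_1)$ denotes the initial value. Since $\sigma_{j_1}\in K_d$, the value $\low(j_1)$ lies in $K_{d-1}$, so by the dimension lemma $\sigma_{j_0}\in K_d$. Moreover, $\low(j_0)\ge\lowstar(j_0)>0$, so $j_0\in T$ by the definition of $T$. This contradicts $j_1=\min T$, so $\lowstar(j_1)=\low(j_1)$.

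For the inductive step, I would assume $\lowstar(j)=\low(j)$ for every $j\in T^{\ell-1}$, and again argue by contradiction. If $\lowstar(j_\ell)<\low(j_\ell)$, then at the moment Alg.\ \ref{alg:mat_red} begins processing $\partial_{j_\ell}$, some $j_0<j_\ell$ with $\lowstar(j_0)=\low(j_\ell)$ must trigger a column addition. By the same dimension argument $\sigma_{j_0}\in K_d$ and $\low(j_0)>0$, so $j_0\in T$; since $j_0<j_\ell$ and the elements of $T$ are enumerated in increasing order, $j_0\in T^{\ell-1}$. The inductive hypothesis then yields
\[
\low(j_0)=\lowstar(j_0)=\low(j_\ell),
\]
which forces two distinct elements of $T^\ell$ to share a $\low$ value, contradicting $|\low(T^\ell)|=\ell$. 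Hence $\lowstar(j_\ell)=\low(j_\ell)$, completing the induction.

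The main obstacle is the dimension invariant: one must verify that the reduction never mixes columns of different dimensions, so that the hypothesis $|\low(T^\ell)|=\ell$, which only constrains same-dimension $\low$ values, suffices to rule out any reducing column operation from outside $T^\ell$. Once this invariant is in place, the induction is essentially bookkeeping, and in particular one does not need to worry about columns from $T\setminus T^{\ell-1}$ interfering, because any would-be reducer $j_0<j_\ell$ in $T$ automatically lies in $T^{\ell-1}$ by the ordering of $T$.
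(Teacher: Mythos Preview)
Your proof is correct and follows essentially the same route as the paper: both use the dimension invariant to place any would-be reducer $j_0$ inside $T^{\ell}$ and then derive a collision contradicting $|\low(T^{\ell})|=\ell$. The paper's version is terser and does not set up an explicit induction (it jumps directly to ``there is $j<j_t$ such that $\low(j)=\low(j_t)$''), whereas your inductive framing makes that step rigorous by first securing $\low=\lowstar$ on $T^{\ell-1}$; otherwise the arguments coincide.
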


\begin{proof}
Let $T$ be defined as in Theorem \ref{th:local_injection_i}.
If $\ell = 1$, then there $\partial_{j_1}$ is the first column of dimension $d$ that appears in the filtration, so there is no column $j \in [j_1]$ that can reduce it.
Hence, suppose that $\ell > 1$ and let $j_t \in T^{\ell}$ be such that $\low(j_t) \neq \lowstar(j_t)$.
Then there is $j < j_t$ such that $\low(j) = \low(j_t)$.
This implies that $j \in K_d \cap [j_t]$ and that $\low(j) > 0$, so $j \in T_{\ell}$.
Hence, $|T_{\ell}| = \ell$, but $|\low(T_{\ell})| < \ell$, which contradicts \eqref{eq:injection_1}.
\end{proof}

Theorem \ref{th:local_injection_ii} addresses a generalisation of Theorem \ref{th:local_injection_i} and argues that for each dimension $d$, any subset of contiguous columns $T \subset K_d \setminus \{j \in K_d : \low(j) = 0\}$ is already reduced if $|\low(T)| = |T|$ and there is no column $j \in K_d \setminus T$ with $j < \min T$ such that $\low(j) \in \low(T)$.

\begin{theorem}[Local injection II]
\label{th:local_injection_ii}
Let $\partial \in \F_2^{m \times m}$ be the boundary matrix of a simplicial complex $K$ and let $d \in [\dim(K)]$.
Define $T$ and $T^{\ell}$ as in Theorem \ref{th:local_injection_i}.
%
%
Assume that for $k > 1$ and $k \leq \ell$ it holds that
\begin{equation}
\label{eq:min_max}
\min \low(T^{\ell} \setminus T^{k-1}) > \max \low(T^{k-1})
\end{equation}
and
\begin{equation}
\label{eq:injection_2}
\left|\low\left(\left\{j_k, \dots, j_{\ell}\right\}\right)\right| = \ell - k + 1.
\end{equation}
Then, $\low(j) = \lowstar(j)$ for all $ j \in \left\{j_k, \dots, j_{\ell}\right\}$.
\end{theorem}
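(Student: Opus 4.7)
The plan is to proceed by contradiction, extending the reasoning used for Theorem \ref{th:local_injection_i}. Suppose that some $j_t \in \{j_k, \dots, j_\ell\}$ has $\low(j_t) \neq \lowstar(j_t)$. Then, since the current $\low(j_t)$ is not minimal, there must exist a column $j < j_t$ with $\low(j) = \low(j_t)$. The first step is to identify where this $j$ can live.

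The key dimensional observation is that, since $\sigma_{j_t}$ is a $d$-simplex, the value $\low(j_t)$ indexes a $(d-1)$-simplex. Because left-to-right column operations only combine columns whose current $\low$ coincide, dimensional purity is preserved during reduction: the nonzero rows of column $j_t$ at any stage sit in indices corresponding to $(d-1)$-simplices. Hence any $j$ with $\low(j) = \low(j_t) > 0$ must likewise satisfy $j \in K_d$. Combined with $\low(j) > 0$, this places $j$ in $T$, so $j = j_s$ for some $s < t$.

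Now the argument splits into two cases according to whether $s$ lies in the ``new'' block $\{j_k,\dots,j_\ell\}$ or in the ``prior'' block $T^{k-1}$. If $s \geq k$, then both $j_s$ and $j_t$ belong to $\{j_k,\dots,j_\ell\}$ with $s \neq t$ but $\low(j_s) = \low(j_t)$; this directly contradicts the injectivity hypothesis \eqref{eq:injection_2}, since it forces $|\low(\{j_k,\dots,j_\ell\})| < \ell-k+1$. If $s < k$, then $j_s \in T^{k-1}$, so $\low(j_s) \leq \max \low(T^{k-1})$, whereas $j_t \in T^\ell \setminus T^{k-1}$ yields $\low(j_t) \geq \min \low(T^\ell \setminus T^{k-1}) > \max \low(T^{k-1})$ by \eqref{eq:min_max}, again contradicting $\low(j_s) = \low(j_t)$.

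The main subtlety is the dimensional purity step that restricts the candidate spoiler $j$ to $K_d$; this is implicit in the proof of Theorem \ref{th:local_injection_i} and relies on the fact that additions in the reduction algorithm occur only between columns of matching $\low$, hence of matching original simplex dimension. Once this is in hand, the dichotomy on whether $s < k$ or $s \geq k$ is routine: the two hypotheses \eqref{eq:min_max} and \eqref{eq:injection_2} have been crafted so that each possibility is individually ruled out, and the theorem follows.
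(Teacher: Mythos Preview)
Your argument follows the paper's proof closely: assume some $j_t$ in the block is unreduced, produce a witness $j<j_t$ with matching $\low$, place that witness in $T$ via the dimension argument, and then use \eqref{eq:injection_2} and \eqref{eq:min_max} to rule out its two possible locations. Your write-up is in fact tidier than the paper's --- you treat all $k>1$ uniformly and make the dichotomy $s\geq k$ versus $s<k$ explicit, whereas the paper singles out $k=2$ and then compresses the two cases into a single clause.

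That said, both your argument and the paper's share a subtle gap. The step ``$\low(j_t)\neq\lowstar(j_t)$, hence there exists $j<j_t$ with $\low(j)=\low(j_t)$'' is not valid as stated: what one can actually deduce is the existence of $j_0<j_t$ with $\lowstar(j_0)=\low(j_t)$, but the \emph{current} $\low(j_0)$ may well exceed $\lowstar(j_0)$. (Take the boundary of a single triangle with filtration order $v_1,v_2,v_3,e_{23},e_{13},e_{12}$: here $\low(e_{12})=2$ and $\lowstar(e_{12})=0$, yet no earlier column has current $\low$ equal to $2$.) The repair is small and slots straight into your structure: pick $j_t$ \emph{minimal} in $\{j_k,\dots,j_\ell\}$ with $\low(j_t)\neq\lowstar(j_t)$, and let $j_0=j_s$ satisfy $\lowstar(j_s)=\low(j_t)$. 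If $s\geq k$, minimality of $t$ forces $\low(j_s)=\lowstar(j_s)=\low(j_t)$, contradicting \eqref{eq:injection_2}. If $s<k$, then $\low(j_s)\geq\lowstar(j_s)=\low(j_t)>\max\low(T^{k-1})\geq\low(j_s)$, which is the contradiction with \eqref{eq:min_max}.
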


\begin{proof}
Let $T$ and $T^{\ell}$ be as in Theorem \ref{th:local_injection_ii} so that $T^{\ell}\setminus T^{k-1} = \left\{j_k, \dots, j_{\ell}\right\}$.
Since $\ell > 0$ and $k > 1$, then $\ell - k + 1 > 0$.
If $k = 2$, then $j_2 \in \low(T^{\ell} \setminus T^{1})$ and invoking \eqref{eq:min_max},
\[
\low(j_2) \geq \min \low(T^{\ell} \setminus T^{1}) > \max \low(T^{1}) = \low(j_1),
\]
so the result follows by invoking Theorem \ref{th:local_injection_i} with $\ell = 2$.
Arguing as in Theorem \ref{th:local_injection_i}, suppose that $k > 2$ and let $j_t \in \left\{j_k, \dots, j_{\ell}\right\}$ be such that $\low(j_t) \neq \lowstar(j_t)$.
Then, there is $j < j_t$ such that $\low(j) = \low(j_t)$.
Given that $j_k \leq j < j_t$ and by \eqref{eq:min_max} then $j \in T^{\ell}\setminus T^{k-1}$.
Hence, $|\low(T_{\ell} \setminus T^{k-1})| < \ell - k + 1$, which contradicts \eqref{eq:injection_2}.
\end{proof}

We now describe how the results presented so far in this section interact in Alg.\ \ref{alg:alpha_beta}.

\subsection{Parallel multi-scale reduction}\label{subsec:parallel}

So far, Theorems \ref{th:beta_properties} -\ref{th:local_injection_ii}
identify a subset of the columns $j$ in $\partial \in \F_2^{m \times m}$ for 
which $\lowstar(j)$ is known without need for performing column
additions.  In order to complete the reduction of $\partial$, that is
identify $\lowstar$, it is necessary to perform column additions
analogous to those in Alg.\ \ref{alg:mat_red}; however,unlike the
sequential ordering in Alg.\ \ref{alg:mat_red}, we will propose massive
parallelisation of the column additions.  Towards this we define the
set of columns $j\in [m]$ for which their $\lowstar(j)$ is known at
any given iteration of a reduction algorithm as the Pivots:
\begin{equation*}
\pivots = \left\{ j \in [m] : \lowstar(j) \in [m] \mbox{ has been
    identified. }\right\} 
\end{equation*}
Moreover, we define the set of column with which a pivot could be
added to in order to reduce their $\low$ as the {\em neighbours} of column
$j$:
\begin{equation}
\label{eq:neighbours}
\neigh(j) = \left\{ \ell >j: \low(\ell)  = \lowstar(j)\right\}.
\end{equation}
%
%
Note that as $\lowstar$ are an injection, if $j_1, j_2 \in \pivots$ and $j_1 \neq j_2$, then $\mathcal{N}(j_1) \cap \mathcal{N}(j_2) = \emptyset$ so 
\begin{equation}
\label{eq:partition}
\left(\bigcup_{j \in \pivots} \neigh(j)\right) \bigcup \pivots \subset [m]
\end{equation}
is a partition and each column in $\mathcal{N}(j)$ for $j\in\pivots$
can have their low reduced independently by adding column $j$ to
columns $\ell\in\neigh(j)$.
%
%
In sequential algorithms like Algs.\ \ref{alg:mat_red} and \ref{alg:twist}, the set $\mathcal{N}(j)$ is constructed only after the block $[j-1] \cap K_{\dim(\sigma_j)}$ has been fully reduced.
In contrast, Alg.\ \ref{alg:alpha_beta} starts by inspecting the sparsity pattern of $\partial$ to identify a large set of {\em seed} $\pivots$ that are then used to build \eqref{eq:neighbours} for each $j \in \pivots$.
Given that \eqref{eq:partition} is a partition, each set
$\mathcal{N}(j)$ is partially reduced independently, typically in
parallel and new columns are appended to $\pivots$ whenever
their update is such that their $\lowstar(\ell)$ is identified by  
either by Theorems \ref{th:beta_properties} -
\ref{th:local_injection_ii}. 
We elaborate on these stages of Alg.\ \ref{alg:alpha_beta}, and its
convergence, in the remainder of this section.
\subsubsection{Phase 0: Initialising $\pivots$}
The first step of Alg.\ \ref{alg:alpha_beta} is to compute the vector $\beta$, which can be done at cost $\mathcal{O}(\nnz(\partial))$ by Theorem \ref{th:beta_properties}.
This vector is then coupled with $\low$ to build the set of initial pivots
\[
\pivots  \leftarrow \left\{ j \in [m]: \beta_j = \low(j) \right\},
\]
which by Theorem \ref{th:beta_properties}, contains indices for
columns that are already reduced, e.g. $\lowstar(j)$ is known for $j\in\pivots$.
The clearing strategy from Lemma \ref{lemma:paired_simplices} is applied to each of these columns to zero out their corresponding positive pairs.
The resulting set of $\pivots$ is then passed on to the main iteration.

\subsubsection{Phase I: Finding local injections}

In the first phase of the main iteration, the following Alg.\ \ref{alg:pivots_expansion} is applied to each dimension $d \in [\dim(K)]$.
\begin{small}
\begin{algorithm}[!htbp]
	\KwData{$\partial \in \F_2^{m \times m}$; $\pivots \subset \negative$}
	\KwResult{ $\pivots' \subset \negative$ $\st$ $\pivots \subset \pivots'$}
      $\lowerbound \leftarrow 0$\;
      \For{$j \in \left\{\ell \in K_d: \low(\ell) > 0 \right\} \setminus \pivots$}{
        \If{$\low(j) > \lowerbound$}{
          \eIf{$\low(j) \notin \low([j-1])$}{
              $\pivots \leftarrow \pivots \cup \left\{ j\right\}$\;
            }{
              $\lowerbound \leftarrow \low(j)$\;
            }
          }
      }
\caption{Finding local injections}
\label{alg:pivots_expansion}
\end{algorithm}
\end{small}

Alg.\ \ref{alg:pivots_expansion} starts by initialising $\lowerbound \leftarrow 0$ and walking through the non-zero columns in $K_d$ that have not been identified as pivots.
The idea is to identify regions of $K_d$ where Theorems \ref{th:local_injection_i} and \ref{th:local_injection_ii} can be applied to guarantee that a local injection exists.
At each $j$, the variable $\lowerbound$ tracks the largest $i \in \low([j-1]) \cap \low(K_d)$ such that $i = \low(j_1) = \low(j_2)$ for $j_1 \neq j_2 $.
In terms of Theorem \ref{th:local_injection_ii}, $\lowerbound$ plays
the r\^{o}le of $\max \low(T^{j-1})$ in \eqref{eq:min_max}, since if
$\low(j) > \lowerbound$ and $\low(j) \notin \low([j-1])$, then there
is no column $\ell < j$ that can have $\low(\ell) = \low(j)$ and
consequently $\low(j)$ cannot be further reduced and therefore its
$\lowstar(j)$ is known.

\subsubsection{Main iteration II: Parallel column reduction}

Once the first phase has been completed, the sets of $\neigh(j)$ are
computed for each $j\in\pivots$.
%
%
Each column in $\bigcap_{j\in\pivots}\neigh(j)$ has their associated
pivot added to them; that is, 
$\partial_{\ell} \leftarrow \partial_{\ell} + \partial_j$ is carried out for each $\ell
\in \mathcal{N}(j)$ for $j\in\pivots$. 
Alg.\ \ref{alg:alpha_beta} then marks the column $\ell$ as reduced if $\low(\ell) =
\beta_{\ell}$, $\\ell$ is added to the set of $\pivots$, and 
%
the clearing strategy is called to set the associated positive column $\partial_{\lowstar(\ell)}$ to zero.

\subsubsection{Convergence}
The convergence of Alg.\ \ref{alg:alpha_beta} is a consequence of the interplay between Phases I and II of the main iteration.
We describe this mechanism in the following theorem.
\begin{theorem}[Convergence of Alg.\ \ref{alg:alpha_beta}]
Let $\partial \in \F_2^{m \times m}$ be the boundary matrix of a
simplicial complex $K$. Then, Alg.\ \ref{alg:alpha_beta} converges
to $\lowstar$.
\end{theorem}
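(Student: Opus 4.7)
The plan is to prove convergence through a two-pronged argument: a correctness invariant on the pivot set together with a strictly decreasing integer potential on the $\low$ vector. Termination of the potential then forces $\low = \lowstar$ via a structural argument about Phase I.

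First, I would establish the invariant that throughout the execution of Alg.\ \ref{alg:alpha_beta}, every $j \in \pivots$ satisfies $\low(j) = \lowstar(j)$ with respect to the current boundary matrix, and moreover $\pivots \subset \negative$. At initialisation (Phase 0), this is immediate from property \ref{it:d} of Theorem \ref{th:beta_properties}: if $\beta_j = \low(j)$ then no further column addition can reduce $\low(j)$. For Phase I, correctness of the newly added pivots follows from Theorems \ref{th:local_injection_i} and \ref{th:local_injection_ii}: the running variable $\lowerbound$ plays exactly the role of $\max\low(T^{k-1})$ in \eqref{eq:min_max}, so the branch that adds $j$ to $\pivots$ is entered precisely when the hypotheses of Theorem \ref{th:local_injection_ii} hold. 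For Phase II, a column $\ell$ is appended to $\pivots$ only when a column addition drives $\low(\ell)$ down to $\beta_\ell$, at which point property \ref{it:d} applies again. The accompanying clearing step $\partial_{\low(j)} \leftarrow 0$ is valid by Lemma \ref{lemma:paired_simplices}.

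Next, I would define the integer potential
\begin{equation*}
\Phi = \sum_{j \in [m]} \low(j)
\end{equation*}
and show it strictly decreases whenever Phase II executes a column addition. Indeed, for $j_0 \in \pivots$ and $\ell \in \neigh(j_0)$ we have $\low(j_0) = \lowstar(j_0) = \low(\ell)$ by the invariant and the definition of $\neigh$; since the leading one of $\partial_{j_0}$ lies in row $\low(\ell)$, the update $\partial_\ell \leftarrow \partial_\ell + \partial_{j_0}$ cancels that entry and strictly decreases $\low(\ell)$. Because $\Phi$ is bounded below by $\sum_j \beta_j \geq 0$ (Theorem \ref{th:beta_properties}, \ref{it:d}), only finitely many such additions are possible, so eventually a full pass through Phase II performs no updates at all.

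Finally, I would show that such a stationary iteration forces $\low = \lowstar$. Suppose not; then there exist $j_1 < j_2$ with $\low(j_1) = \low(j_2) > 0$. Because $\partial$ is a boundary matrix and all column operations preserve the property that nonzeros in column $j$ correspond to codimension-one faces of $\sigma_j$, both $j_1$ and $j_2$ lie in the same $K_d$. When Phase I processes dimension $d$, consider the smallest non-pivot $j^\star \in K_d$ that represents an unresolved equivalence class of $\low$; by minimality no $j' \in [j^\star-1]$ has $\low(j') = \low(j^\star)$, so the test $\low(j^\star) \notin \low([j^\star-1])$ succeeds before $\lowerbound$ is ever raised to $\low(j^\star)$, and $j^\star$ is appended to $\pivots$. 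Its neighbour set then contains at least one $\ell > j^\star$, so Phase II must perform a column addition, contradicting stationarity of $\Phi$.

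The main obstacle will be the last step: formally verifying that the traversal order in Phase I, together with the dynamics of $\lowerbound$, is guaranteed to promote the minimum representative of every unresolved $\low$-class to a pivot. This requires a careful induction over the columns of $K_d$ in increasing order, tracking $\lowerbound$ and showing it never exceeds $\low(j^\star)$ before $j^\star$ is processed; once this is in place the potential-decrease and invariance arguments assemble into the convergence conclusion.
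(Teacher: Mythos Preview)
Your proposal is correct and follows essentially the same approach as the paper: both arguments hinge on showing that, once no further column reductions occur, Phase I must promote the minimal colliding column $j^\star$ (the paper's $j_{\min}=\min\{j:|\mathcal{N}(j)|>0\}$) to a pivot, yielding a contradiction. Your potential function $\Phi=\sum_j\low(j)$ and explicit correctness invariant are a clean repackaging of the paper's per-iteration dichotomy between $|\pivots^{\ell+1}|\ge|\pivots^{\ell}|+1$ and $\|\low^{\ell+1}-\lowstar\|<\|\low^{\ell}-\lowstar\|$, but the decisive combinatorial step is identical.
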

\begin{proof}
Let $\pivots^{\ell} \subset [m]$ be the set of indices of columns that are known to be already reduced at the start of iteration $\ell$, and let $\low^{\ell}$ be the corresponding estimate of $\lowstar$.
We show that Alg.\ \ref{alg:mat_red} converges by showing that if $\lowstar \neq \low^{\ell}$, then either
\begin{equation}
\label{eq:pivots_increase}
|\pivots^{\ell + 1}| \geq |\pivots^{\ell}| + 1
\end{equation}
or,
\begin{equation}
\label{eq:norm_decrease}
\|\low^{\ell + 1} - \lowstar\| < \|\low^{\ell} - \lowstar\|.
\end{equation}
Note that $|\pivots^{0}| \geq \dim(K) \geq 1$ since at Phase 0 at least the first column of each dimension must be identified as a pivot.

Suppose that at the start of iteration $\ell \geq 0$ the boundary matrix $\partial$ is still not reduced so that $\low^{\ell} \neq \lowstar$.
If a column is identified as reduced at the end of Phase I, then it is marked as a pivot so $|\pivots^{\ell + 1}| \geq |\pivots^{\ell}| + 1$ and we are done.
Hence, assume that no new pivots are identified.
In this case, for each $j \in \pivots^{\ell}$ we construct the set $\mathcal{N}(j)$ as in \eqref{eq:neighbours}.

If $\exists j_0 \in \pivots^{\ell}$ such that $|\mathcal{N}(j_0)| \geq
1$, then Alg.\ \ref{alg:alpha_beta} enters Phase II to reduce columns in $\mathcal{N}(j_0)$.
In this case, \eqref{eq:norm_decrease} holds and we are done.
Otherwise, suppose $|\mathcal{N}(j_0)| = 0$ for all $j \in \pivots^{\ell}$.
In this case, since the matrix is not reduced, the set
\[
\mathcal{C} = \left\{j \in [m] : \left|\mathcal{N}(j)\right| > 0 \right\}
\]
is not empty. Let,
\begin{equation}
\label{eq:j0_minimality}
j_{\min} = \min \mathcal{C}
\end{equation}
and let $d = \dim(\sigma_{j_{\min}})$.
There are two possible cases,
\begin{enumerate}
\item $j_{\min} = \min K_{d}$: In this case, the column must have been
  identified as a pivot at Phase 0, which is a case previously excluded.
\item $j_{\min} > \min K_{d}$: Let
\[
T = K_{d} \setminus \left\{ j \in K_d : \low(j) = 0\right\}.
\]
In this case, it must hold that $|\low([j_{\min}] \cap T)| = |[j_{\min}] \cap T|$ since otherwise, there would exist an $\ell \in [j_{\min}-1] \cap T$ such that $\low(\ell) = \low(j_{\min})$ implying that $|\mathcal{N}(\ell)| > 0$ and thus contradicting the minimality of $j_{\min}$.
Hence, by Theorem \ref{th:local_injection_i}, $\low([j_{\min}] \cap T) = |[j_{\min}] \cap T|$, so $j_{\min}$ must have been identified at Phase I of the iteration.
This is again, a contradiction.
\end{enumerate}
Therefore, it is impossible to have $|\mathcal{N}(j_0)| = 0$ for all $j_0 \in \pivots^{\ell}$ if no pivots were identified at Phase I of the iteration.
%
Hence, as long as $\low^{\ell} \neq \lowstar$ either
\eqref{eq:pivots_increase} or \eqref{eq:norm_decrease} hold, so at
each iteration Alg.\ \ref{alg:alpha_beta} will increase the number of
pivots, decrease some entries in $\low$, or both.
\end{proof}

The multi-scale nature of Alg.\ \ref{alg:alpha_beta} is advantageous to implement a number of strategies that allow us progressively refine our knowledge of $\lowstar$.
We describe some of these strategies in the next subsections.

\subsection{Clearing by compression}
\label{sec:optional_addon}

In this section we describe a clearing strategy based on a column
compression arguments inspired by the previous compression arguments
presented in \cite{bauer2014clear} to reduce operation flop-count.
This strategy, in its simplest form presented in Corollary \ref{eq:compression_c8}, can be
straightforwardly implemented in a parallel architecture in Alg.\ 
\ref{alg:alpha_beta}, but is not explicitly listed in Alg.\
\ref{alg:alpha_beta} for conciseness.
%
%
Here, we present a compression result that can in some cases help clear columns in the matrix and in some other cases can give improved bounds on the location of $\lowstar$.
{\em In doing so, it will be convenient to define the set 
$\paired^{\ell} \subset \paired$, as the sets of elements in \eqref{eq:paired} that have been identified at the $\ell$-th iteration by Alg.\ \ref{alg:alpha_beta}.}

Note that $\paired^{\ell}$ at iteration $\ell$
include the set of unreduced columns that have been identified as negative.
%
In particular, by Theorem \ref{th:beta_properties},
\[
\left\{j  \in [m]: \beta_j > 0\right\} \subset \paired^{\ell}.
\]

\begin{lemma}[Clearing by local compression]
\label{lemma:c8}
Let $\partial \in \F_2^{m \times m}$ be a boundary matrix of a simplicial complex $K$. Let $j \in [m]$ and $d = \dim(\sigma_j) \in [\dim(K)]$.
Then at iteration $\ell$ of Alg.\ \ref{alg:alpha_beta},
\begin{equation}
\label{eq:c8_eq}
\lowstar(j) \in L_j:= \left\{ 0 \right\} \cup \left(K_{d-1} \cap [\beta_j, \low(j)]\right) \setminus \paired^{\ell}.
\end{equation}
\end{lemma}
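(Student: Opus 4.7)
The plan is to verify each of the three constraints defining $L_j$ separately, implicitly restricting attention to columns $j$ whose pair has not yet been recorded in $\paired^{\ell}$ (otherwise $\lowstar(j)$ is already known and lies in $\paired^{\ell}$, so the exclusion in $L_j$ would make the statement degenerate). If $\lowstar(j)=0$ the claim is immediate since $0\in L_j$ by construction, so from here on I would assume $\lowstar(j)>0$.

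First I would show $\lowstar(j)\in K_{d-1}$. The nonzero rows of the initial column $\partial_j$ correspond exactly to the faces of $\sigma_j$ of co-dimension $1$, which all have dimension $d-1$ by definition of the boundary matrix \eqref{def:boundary_matrix}. Left-to-right reductions only add to $\partial_j$ a column $\partial_{j_0}$ with $\low(j_0)=\low(j)$, and since $\sigma_{\low(j)}$ is a face of both $\sigma_j$ and $\sigma_{j_0}$ of co-dimension $1$, this forces $\dim(\sigma_{j_0})=\dim(\sigma_j)=d$. Hence every column added to $\partial_j$ throughout the reduction is supported inside $K_{d-1}$, and so is $\partial_j^*$; in particular $\lowstar(j)\in K_{d-1}$.

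Second, the interval constraint $\lowstar(j)\in[\beta_j,\low(j)]$ is exactly property P.5 of Theorem \ref{th:beta_properties}. Third, I would rule out $\lowstar(j)\in\paired^{\ell}$ as follows. By Lemma \ref{lemma:paired_simplices}, $\sigma_{\lowstar(j)}$ is a positive simplex, so $\lowstar(j)$ cannot belong to $\paired^{\ell}$ as a negative index. It can therefore only belong to $\paired^{\ell}$ as the identified positive pair of some negative index $k$, meaning $\lowstar(j)=\lowstar(k)$ for some already-paired $k$ with $k\neq j$ (since we assumed $j\notin\paired^{\ell}$). But $\lowstar$ is an injection on its support, which forces $k=j$, a contradiction. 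Therefore $\lowstar(j)\notin\paired^{\ell}$. Combining the three parts yields $\lowstar(j)\in L_j$.

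The main obstacle, and the only step that is not completely mechanical, is the third one: it requires a clean interpretation of what $\paired^{\ell}$ actually contains at iteration $\ell$ and a careful use of the injectivity of $\lowstar$ on its support. The pairing uniqueness from Lemma \ref{lemma:paired_simplices} together with the positivity of $\sigma_{\lowstar(j)}$ is precisely what legitimises the set-subtraction $\setminus\paired^{\ell}$; the remaining parts follow directly from Theorem \ref{th:beta_properties} and from the dimension-preserving nature of admissible column operations.
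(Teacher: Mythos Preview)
Your proof is correct and follows essentially the same route as the paper's: both verify the three constraints separately, invoke Theorem~\ref{th:beta_properties} for the interval $[\beta_j,\low(j)]$, and use the injectivity of $\lowstar$ together with the positivity of $\sigma_{\lowstar(j)}$ from Lemma~\ref{lemma:paired_simplices} to exclude $\paired^{\ell}$. Your argument is in fact slightly more careful than the paper's on two points: you spell out why the support of the reduced column stays inside $K_{d-1}$ (the paper merely says ``by the definition of $\partial$''), and you explicitly flag the implicit hypothesis $j\notin\paired^{\ell}$, without which the set-subtraction in $L_j$ would indeed exclude the correct value of $\lowstar(j)$.
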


\begin{proof}
If $j \in [m]$, and $d = \dim(\sigma_j)$, then $\lowstar(j) \in
K_{d-1}$ by the definition of $\partial$; moreover, $\lowstar(j) \in
[\beta_j, \low(j)]$ by Theorem \ref{th:beta_properties}. 
The injection condition on the support of $\lowstar$ implies that if $\lowstar(j)$ can not be equal to any of the observed positive $\lowstar$'s.
Finally, $\lowstar(j)$ can not be equal to the index of any of the identified
negative columns as $\partial_{\lowstar(j)}$ is necessarily positive,
and can not be equal to any of the indices of paired positive columns
by the injection condition and Lemma \ref{lemma:paired_simplices}. 
Hence, $\lowstar(j) \notin \paired^{\ell}$. 
\end{proof}
%
%
%
%
Lemma \ref{lemma:c8} implies the following corollary,
\begin{corollary}
\label{eq:compression_c8}
Let $\partial \in \F_2^{m \times m}$ and let $L_j$ be defined as in \eqref{eq:c8_eq} and such that $|L_j| = 1$.
Let $i \in L_j$.
If $i = 0$, then $\lowstar(j) = 0$. Else, $i = \lowstar(j)$ and
consequently column $j$ is negative and fully reduced and
$\partial_i$ is positive.
\end{corollary}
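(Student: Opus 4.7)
The plan is to derive the corollary as a near-immediate case analysis on the singleton $L_j$, using Lemma \ref{lemma:c8} as the main input together with the definitions of $\positive$, $\negative$, and the pairing statement in Lemma \ref{lemma:paired_simplices}.

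First I would invoke Lemma \ref{lemma:c8} to assert $\lowstar(j) \in L_j$. Since the hypothesis gives $|L_j| = 1$ and $i \in L_j$, we must have $L_j = \{i\}$ and therefore $\lowstar(j) = i$. All subsequent conclusions follow from unpacking what $\lowstar(j) = i$ means in the two cases $i = 0$ and $i > 0$.

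Case $i = 0$. By the partitioning of $[m]$ into $\positive$ and $\negative$ induced by $\lowstar$, $\lowstar(j) = 0$ is precisely the defining condition for $j \in \positive$, so $\lowstar(j) = 0$ as claimed.

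Case $i > 0$. Then $\lowstar(j) = i > 0$ places $j$ in $\negative$ by definition. Lemma \ref{lemma:paired_simplices} applied to the negative simplex $\sigma_j$ gives that $\sigma_{\lowstar(j)} = \sigma_i$ is positive, so the column $\partial_i$ is a positive column. Finally, once $\lowstar(j) = i$ is known, column $j$ is determined up to its pairing and therefore can be marked as fully reduced in the bookkeeping of Alg.\ \ref{alg:alpha_beta}; in particular, its pair $\partial_i$ is eligible for clearing in the sense of Lemma \ref{lemma:paired_simplices}.

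There is essentially no obstacle: the corollary is a bookkeeping translation of Lemma \ref{lemma:c8} under the singleton hypothesis. The only small point worth flagging is that $L_j$ is not literally the support of $\lowstar(j)$ but an upper bound on it; the singleton hypothesis collapses this bound to an equality, which is what makes the case analysis clean.
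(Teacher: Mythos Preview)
Your proposal is correct and matches the paper's approach: the paper presents the corollary as an immediate consequence of Lemma~\ref{lemma:c8} without a separate proof, and your case analysis on the singleton $L_j$ together with Lemma~\ref{lemma:paired_simplices} is exactly the intended unpacking. The only point to phrase carefully is ``fully reduced'': here it means $\lowstar(j)$ is identified (so $j$ joins $\pivots$), not necessarily that the current $\low(j)$ already equals $\lowstar(j)$; your bookkeeping remark captures this.
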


\subsection{Essential estimation}
\label{sec:essential_theory}

Another way to reuse the partial knowledge of negative and associated
positive columns, as given by $\paired^{\ell} \subset \paired$ be the
set of columns that have been identified as paired at the $\ell$-th
iteration of Alg.\ \ref{alg:alpha_beta}, is to estimate the essential simplices
\begin{lemma}[Essential estimation]
\label{lemma:essential_estimation}
Let $\partial \in \F_2^{m \times m}$ be a boundary matrix and $j \in
[m]$, then at iteration $\ell$
\begin{equation}
\label{eq:essential_estimation}
\essential \subset \left\{[m] \setminus \paired^{\ell}\right\}\setminus \left\{\low(j) : j \in [m]\right\}
\end{equation}
\end{lemma}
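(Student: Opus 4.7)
The plan is to check, for any $i\in\essential$, that both $i\notin\paired^{\ell}$ and $i\notin\{\low(j):j\in[m]\}$ hold at iteration $\ell$. The first condition is a definition chase: $i\in\essential$ means $i$ is positive and does not appear in the image of $\lowstar$, so $i\notin\paired$ by \eqref{eq:paired}, and hence $i\notin\paired^{\ell}$ since $\paired^{\ell}\subset\paired$ by construction.

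The second condition is the substance of the proof. I would argue by contradiction: suppose $\low(j)=i$ for some $j$ at iteration $\ell$, and then track the fate of column $j$ under Alg.\ \ref{alg:alpha_beta} until termination at some iteration $L$, at which $\low^{L}=\lowstar$ by the convergence theorem already established in the paper. Two cases exhaust the possibilities. Either the lowest nonzero of column $j$ remains at row $i$ through termination---in which case $\lowstar(j)=i$---or the entry $\partial_{i,j}$ is cleared at some intermediate iteration $\ell'\in(\ell,L]$. Inspecting Phase II of Alg.\ \ref{alg:alpha_beta} shows that such a clearing must arise from an operation $\partial_j\leftarrow\partial_j+\partial_{j_0}$ with $j_0\in\pivots^{\ell'}$ and $\low^{\ell'}(j_0)=i$, so $\lowstar(j_0)=i$ by the defining property of pivots. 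In either case $i$ lies in the image of $\lowstar$, contradicting $i\in\essential$.

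The main obstacle is the invariant that every column used as a summand in Phase II is a pivot whose $\lowstar$-value coincides with its current $\low$. This invariant is guaranteed by the three pivot-identification mechanisms: Phase 0 via $\beta_j=\low(j)$ and property \ref{it:d} of Theorem \ref{th:beta_properties}, Phase I via Theorems \ref{th:local_injection_i} and \ref{th:local_injection_ii}, and Phase II via the post-reduction test $\low(j)=\beta_j$. Once this invariant is verified, the two-case analysis above forces $i\in\paired$, yielding the required contradiction and closing the proof.
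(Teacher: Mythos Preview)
Your handling of $i\notin\paired^{\ell}$ is fine. The gap is in the second half: you claim that if the entry $\partial_{i,j}$ is ever zeroed it ``must arise from an operation $\partial_j\leftarrow\partial_j+\partial_{j_0}$'' in Phase~II, but Phase~II (and Phase~0) also contains the wholesale clearing step $\partial_{\low(j')}\leftarrow 0$. If the column you are tracking is positive and happens to be the partner $\lowstar(j')$ of a newly discovered pivot $j'$, it is zeroed directly while its $\low$-value is still $i$, with no pivot of $\lowstar$-value $i$ ever added to it. In that branch your argument tells you only that $j$ lies in the image of $\lowstar$, not that $i$ does, so the contradiction with $i\in\essential$ does not follow.

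The paper argues structurally rather than operationally: it splits on whether column $j$ is positive or negative and uses that the current column always lies in $\spn\{\partial_1,\dots,\partial_j\}=\spn\{\partial_1^{*},\dots,\partial_j^{*}\}$. Since the nonzero reduced columns have pairwise distinct $\lowstar$-values, any nonzero vector in this span with lowest entry in row $i$ forces $i=\lowstar(q)$ for some $q\le j$; this is what the paper means by ``there must exist $q<j$ such that $\lowstar(q)=\low(j)$ in order that $j$ can be reduced.'' The span property survives both column additions and clearings, so this route is insensitive to which mechanism zeroed the entry. To repair your proof you would need exactly this span fact in the cleared-column case, at which point the forward tracking becomes redundant.
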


\begin{proof}
Clearly, $\essential \cap \paired^{\ell} = \emptyset$ for all $\ell$ by the partition on the columns.
We show that $\essential \cap \left\{\low(j) : j \in [m]\right\} = \emptyset$.
To do this, we follow a proof by contradiction and suppose that
$\low(j) \in \essential$, which implies  
\begin{equation}
\label{eq:branch}
\nexists p \st \lowstar(p) = \low(j)
\end{equation}
as otherwise $p$ is the index of a negative column and $\lowstar(p)$
the index of an associated positive non-essential column.
We show that column $j$ can not be either positive or negative, which leads to a contradiction.
\begin{enumerate}
\item {$j \in \positive$}: As we are determining $\essential \cap
  \left\{\low(j) : j \in [m]\right\}$, we consider only the $j$ such that
  $\low(j) > 0$, but since $j \in \positive$ we must have $\lowstar(j)
  = 0$. By the definition of $j$ being positive there 
must exist $q < j$ such that $\lowstar(q) = \low(j)$ in order that $j$
can be reduced to the zero column; however, this contradicts
\eqref{eq:branch} by setting $p = q$. 
\item {$j \in \negative$}: By Theorem \ref{th:beta_properties} $\low(j)
  \geq \lowstar(j)$. If $\low(j) = \lowstar(j)$, then
  \eqref{eq:branch} is contradicted with $p = j$. Otherwise, if
  $\low(j) > \lowstar(j)$, then by the definition of $\lowstar(j)$ 
there must exist $q < j$ such that $\lowstar(q) = \low(j)$ in order to
reduce column $j$. This again, contradicts \eqref{eq:branch}
\end{enumerate}
\end{proof}

Theorem \ref{eq:essential_estimation} implies that the set of essential columns can be iteratively estimated by a set $E^{\ell} \subset [m]$ initialised at $E^0 = [m]$ and iteratively updated as,
\begin{equation}
\label{eq:essential_iteration}
E^{\ell + 1} \leftarrow \left(E^{\ell} \setminus \paired^{\ell}\right) \setminus \left\{\low(j) : j \in [m]\right\}.
\end{equation}

\subsection{Distributing the workload}
\label{sec:workload_distribution}

Central to the efficacy of Alg.\ \ref{alg:alpha_beta} is the typically
large number of pivots available to reduce subsequent columns coupled
with the ability of all column additions in its Phase 2 to be
performed in parallel.  We do not explicitly describe 
parallelisation  strategies for Phase 2 due to architecture specificity,
but make a few remarks regarding communication minimisation and early
termination in Secs.\ \ref{sec:largecardinality} and \ref{sec:termination} respectively.

\subsubsection{Prioritise reduction of sets $\mathcal{N}(j)$ with large cardinality}\label{sec:largecardinality}

Let,
\begin{equation}
\label{eq:neighbours_set}
\mathcal{C} = \left\{j \in \pivots : |\mathcal{N}(j)| > 0\right\},
\end{equation}
be the set of columns that can be used in a given iteration to
decrease $\low$.


If only $p<|\mathcal{C}|$ processors are available, and communication
of these active pivots is the dominant cost, then it may be
desirable to implement only a portion of the possible column
additions available in Phase 2 duration an iteration.  In such a
setting, a priority queue can be used to order 
the sets $\mathcal{N}(j)$ based on their cardinality and act on
multiples of $p$ sets of active columns per iteration of Phase 2.
%

\subsubsection{Prioritise reduction of $\mathcal{N}(j) \cap \negative$}\label{sec:termination}

In settings where $\lowstar$ cannot be fully determined due to
computational or time constraints, the reduction ordering can be
prioritised to minimize various objectives.  One such objective is to
minimize $\|\low - \lowstar\|$ in a suitable norm as rapidly as
possible.  Recall, Theorem \ref{th:beta_properties} gives bounds on
the value of $\lowstar$ and $\low$ is available by inspection.  Due to
the the identification of fully reduced negative column, say column
$j$ allows clearing of column $\lowstar(j)$ there is benefit in
prioritising the reduction of columns which are known to be negative,
e.g. $\left\{ j \in [m] : \beta_j > 0\right\} \subset \negative$.
Alternative prioritisations would be application specific, but can
include such information as prioritising values for which the
persistence is greater or improving the estimation of essential columns
as described in Sec.\ \ref{sec:essential_theory}.

\section{Numerical experiments}
\label{sec:numerics}

In this section we evaluate the efficacy of our parallel multi-scale
reduction Alg.\ \ref{alg:alpha_beta} empirically by comparing its
performance against the standard reduction Alg.\ \ref{alg:mat_red} and
standard reduction with a twist Alg.\ \ref{alg:twist}.
Our evaluation is on a set of synthetic simplicial complexes from point-clouds sampled from a set of a predefined set of ensembles.
The point-clouds and their corresponding Rips-Vietoris simplicial complexes are generated with the {\tt Javaplex} \citep{Javaplex} library.
When building the simplicial complex, we supply the following parameters,
\begin{enumerate}
\item Number of points ($N$): The number of points in the point-cloud to be generated.
\item Maximum dimension ($\dim K$): The maximum dimension of the resulting simplicial complex $K$.
\item Maximum filtration value ($r_T$): The maximum radius in a predefined grid of scales $\left\{ r_1, \dots, r_T\right\} \subset [0, \infty)$ used to build the filtration.
\item Number of divisions ($h$): The grid of scales is uniform with
  spacing $h$, that is $h:= r_{i+1} - r_{i}$ for $i \in [T-1]$.
\end{enumerate}
\[
\begin{array}{l | c  c  c  c}
\mbox{ Ensemble } & N & \dim K & r_T & h\\
\hline
\hline
\mbox{ Gaussian Points in $\R^3$}             & 15  & 5  & 5  & 10  \\
\mbox{ Figure-8}                              & 15  & 5  & 5  & 10  \\
\mbox{ Trefoil Knot}                          & 15  & 5  & 5  & 10  \\
\mbox{ $\mathbb{S}^2\times \mathbb{S}^2$}     & 15  & 5  & 5  & 10
\end{array}
\]
%

Central to our numerical evaluations in this manuscript is the number
of iterations required.
The reduction in Algs.\ \ref{alg:mat_red} and \ref{alg:twist} is an {\em horizontal} procedure in
the sense that no column in a given dimension is reduced before the preceeding
columns in this dimension have been reduced.
In contrast, Alg.\ \ref{alg:alpha_beta} is a {\em diagonal} procedure as it implements
an horizontal type of reduction in Phases I and II, but rather than completely reducing the columns
in order, it progressively prunes the matrix {\em vertically} by doing left-to-right column operations
every time there is enough information to guarantee that this is possible.
Given this crucial difference in design, it becomes difficult to
find appropriate benchmarking scenarios and, more fundamentally,
to provide a notion of {\em iteration} that is agnostic to the algorithm's architecture.
Hence, in our numerical experiments, we let an iteration be the set of operations that are
performed at each cycle of the outer-most loop of the algorithm.
This is a reasonable convention as it is consistent with each of the algorithm's concurrency and
is also natural for the high-level pseudo-code description of their design.
However, we note that this notion of iteration can be optimistic or pessimistic depending on
the unit of computational overhead that is being measured.
In our experiments, {\em left-to-right} column operations are chosen as the main
unit of computational overhead and, indeed, our algorithm has been designed to minimise
this kind of operations.
Given the embarrassingly parallel nature of Phase II of Alg.\ \ref{alg:alpha_beta},
this model is generous with our algorithm as it does not consider possible limitations to
the number of processors available and it does not account for the communication overhead
between processors.
Finally, as our algorithm's practical performance is greatly limited by the hardware architecture,
we point out that our numerical experiments should serve as a proof-of-concept of a design that can yield
a powerful production implementation rather than a trustworthy comparison of the run-time or flop-count in the general case.

%

The numerical results presented in this section simulate
a parallel implementation and are available at \cite{githubRepo}. 
A truly parallel implementation is being developed.


\subsection{Operations are packed in fewer iterations}

For each ensemble we sample three point-clouds and reduce their
corresponding boundary matrices using each of Algs.\ \ref{alg:mat_red}
- \ref{alg:twist}.
Fig.\ \ref{fig:operation_count_benchmark} illustrates the
computational complexity, as the number of column 
additions performed at each iteration and also the number of
nontrivial {\em xor} operations when adding columns in the matrix;
that is, the number of scalar operations of the form 
\[
\left\{1 \oplus 1, 1 \oplus 0, 0 \oplus 1\right\}.
\]
Scalar operations are relevant because in some storage models like the
one given in \cite{chen2011persistent} the computational cost of
updating the matrix depends super-linearly on the number of non-zeros
in the columns being added. 
%
%
Figs.\ \ref{fig:col_adds_gaussian}, \ref{fig:col_adds_trefoil} and
\ref{fig:col_adds_sphere} show the number of column additions per
iteration of each algorithm performs to reduce each of the simplicial
compelexes under consideration. 
Specifically, they illustrate how that Alg.\ \ref{alg:alpha_beta}
allocates most of the column additions at the earliest iterations due
to its highly parallel nature.
On the other hand, Figs.\ \ref{fig:col_adds_cumulative_gaussian},
\ref{fig:col_adds_cumulative_trefoil}, and
\ref{fig:col_adds_cumulative_sphere}  as well as Tab.\
\ref{tab:ratio_cumsum_operations} 
show that while Alg.\
\ref{alg:alpha_beta} reduces the number of iterations, it has a total
number of column additions indistingushable to Alg.\ \ref{alg:twist}
and about half that of Alg.\ \ref{alg:mat_red}.
This shows that Alg.\ \ref{alg:alpha_beta} is successful in
packing the number of left-to-right column operations into
considerably few independent iterations. 

\begin{figure*}[!htbp]
	\centering
\subfloat[Gaussian; Count of column additions][Gaussian \\ Count of column additions]{\includegraphics[width=0.33\linewidth]{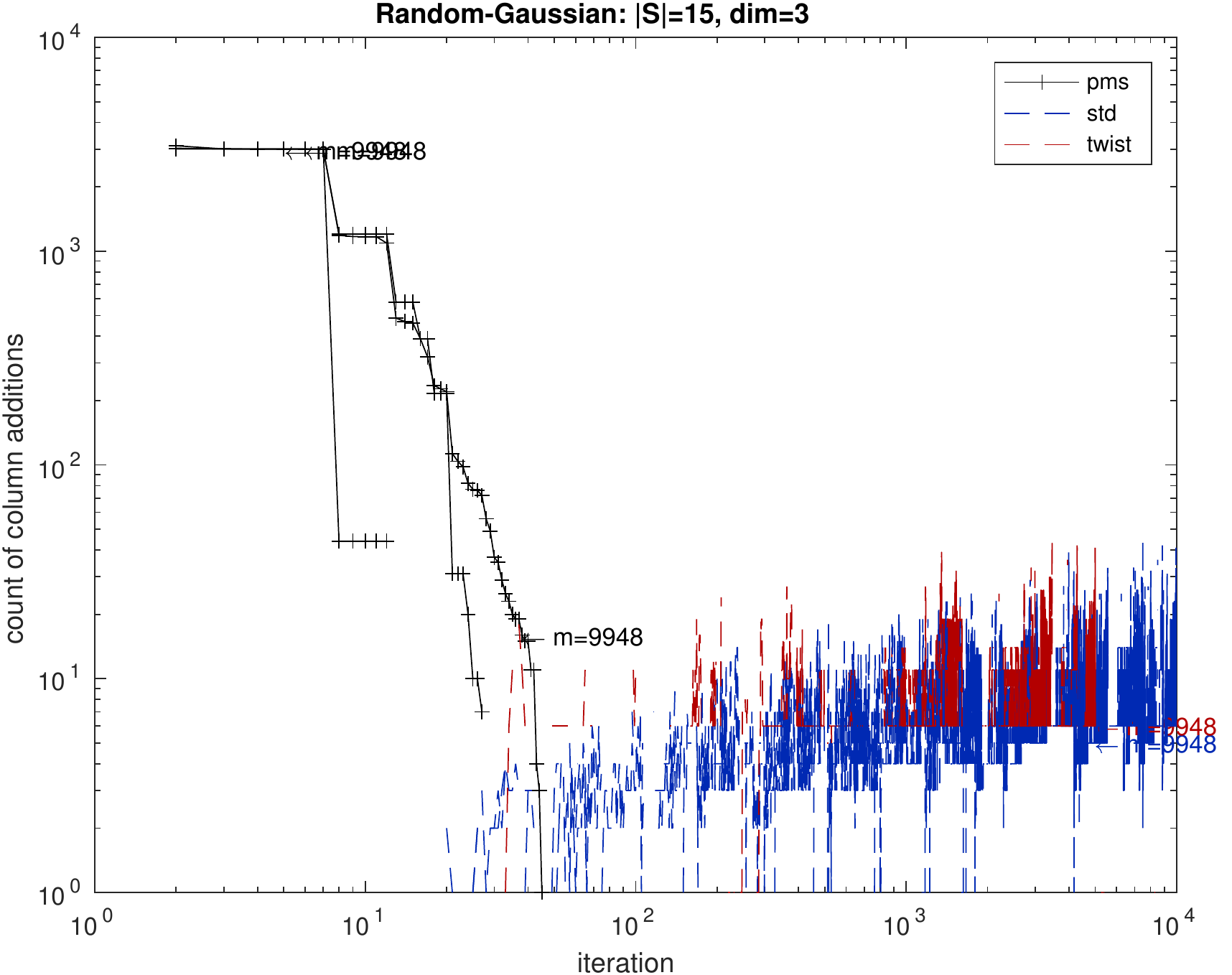}\label{fig:col_adds_gaussian}}
\subfloat[Gaussian; Count of column additions (cumulative)][Gaussian \\ Count of column additions (cumulative)]{\includegraphics[width=0.33\linewidth]{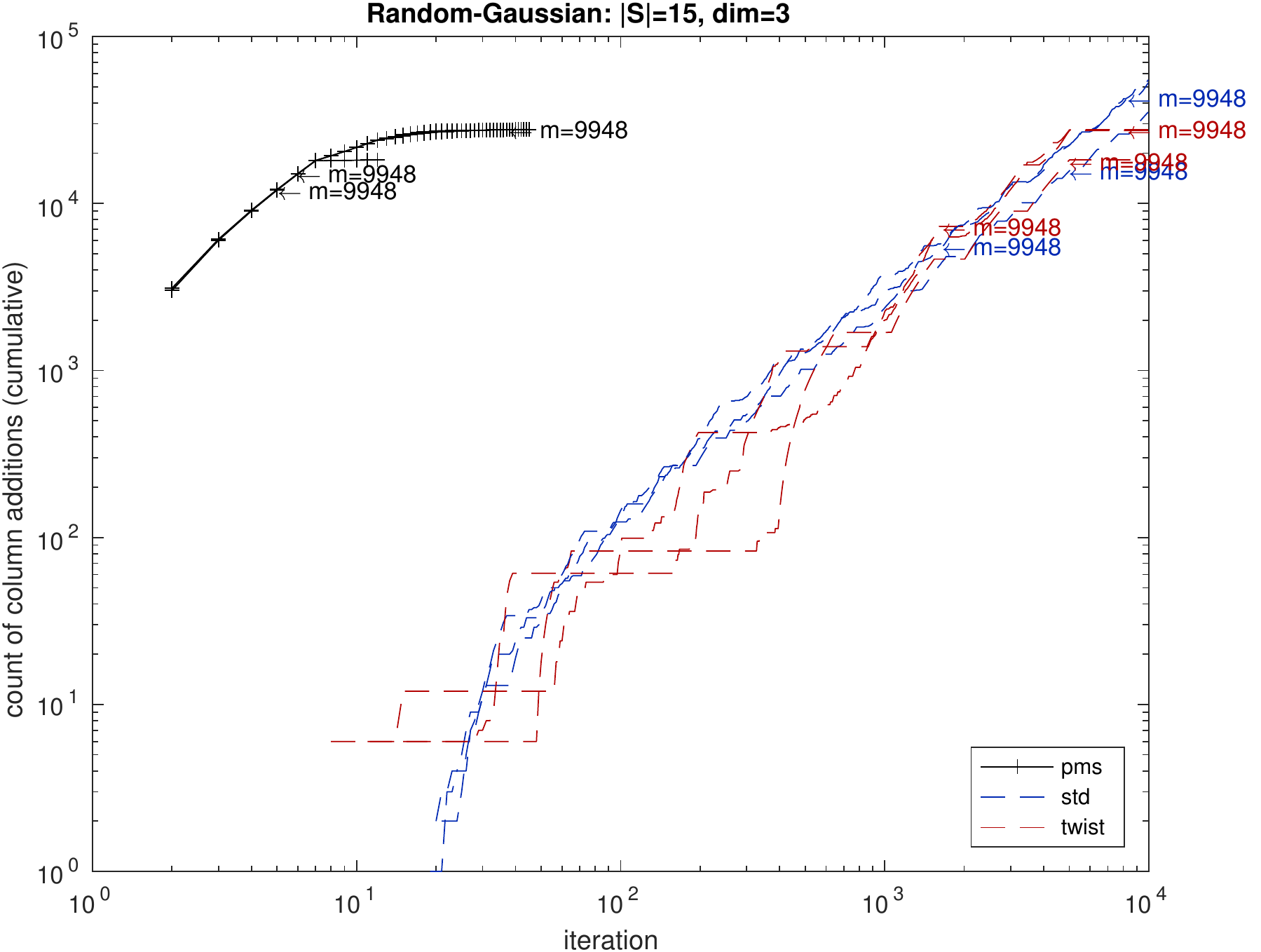}\label{fig:col_adds_cumulative_gaussian}}
\subfloat[Gaussian; Count of {\tt XOR} operations (cumulative)][Gaussian \\ Count of {\tt XOR} operations (cumulative)]{\includegraphics[width=0.33\linewidth]{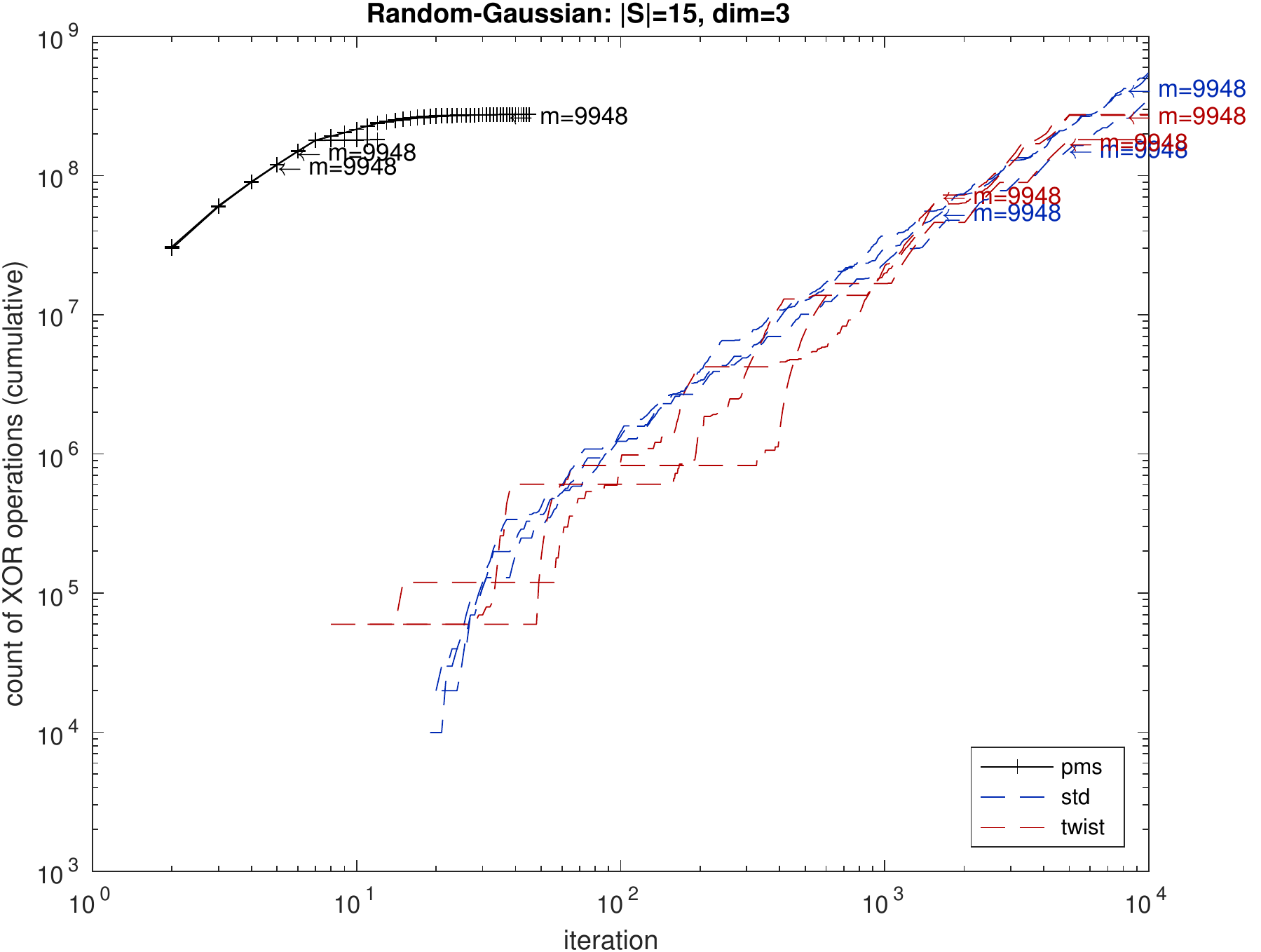}\label{fig:num_entry_adds_cumulative_gaussian}}\\

\subfloat[Figure-8; Count of column additions][Figure-8 \\ Count of column additions]{\includegraphics[width=0.33\linewidth]{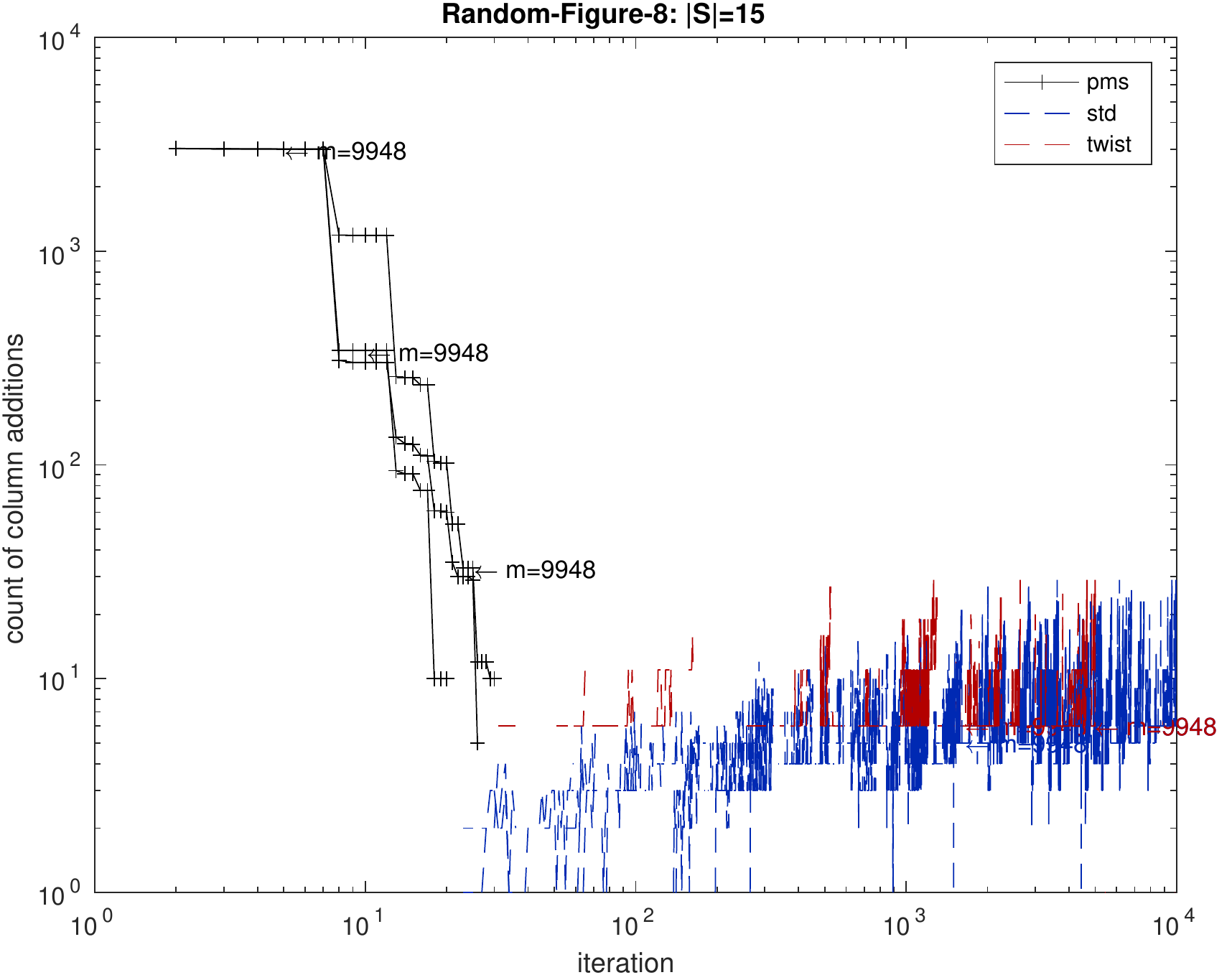}\label{fig:col_adds_f8}}
\subfloat[Figure-8; Count of column additions (cumulative)][Figure-8 \\ Count of column additions (cumulative)]{\includegraphics[width=0.33\linewidth]{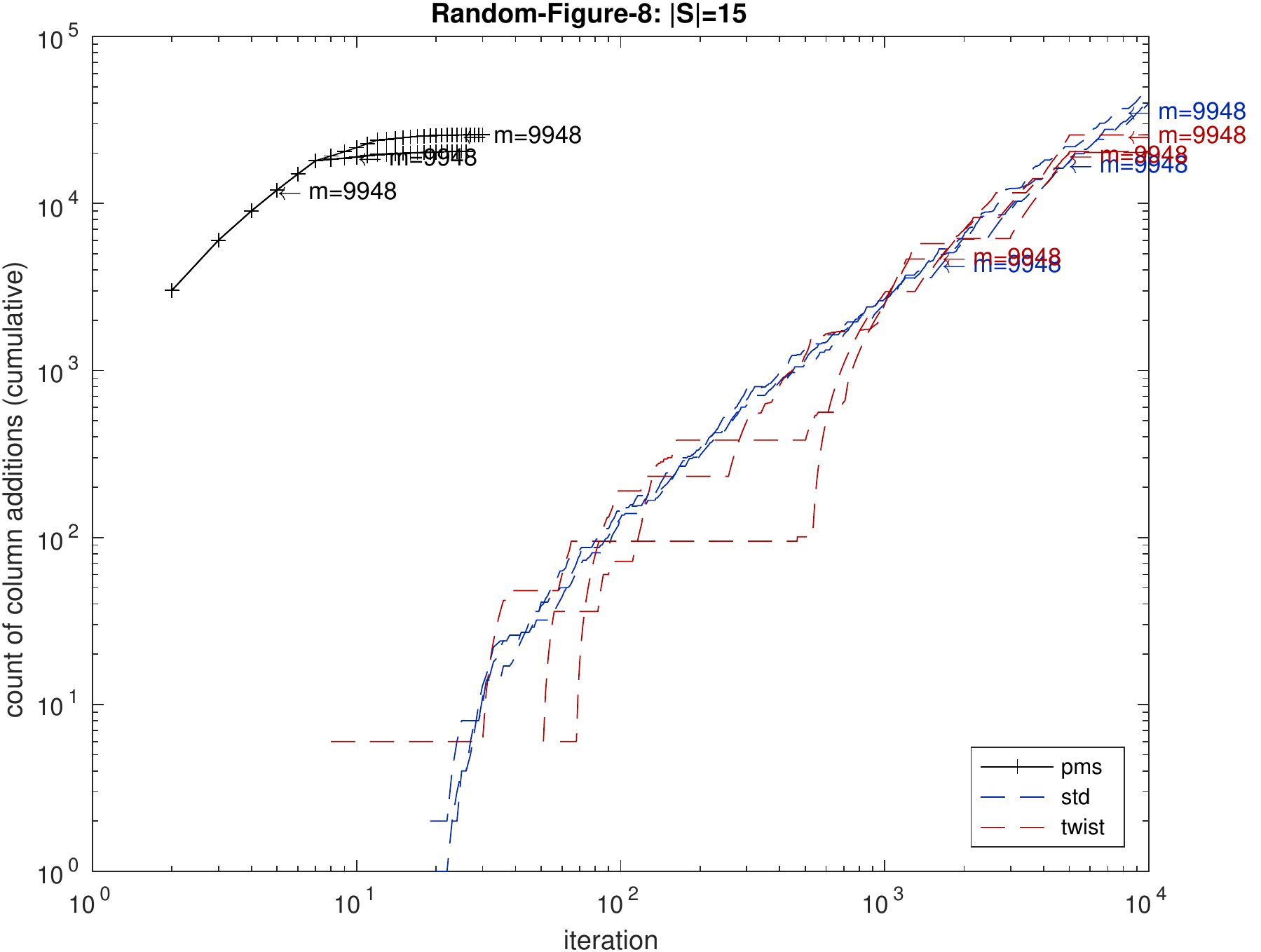}\label{fig:col_adds_cumulative_f8}}
\subfloat[Figure-8; Count of {\tt XOR} operations (cumulative)][Figure-8 \\ Count of {\tt XOR} operations (cumulative)]{\includegraphics[width=0.33\linewidth]{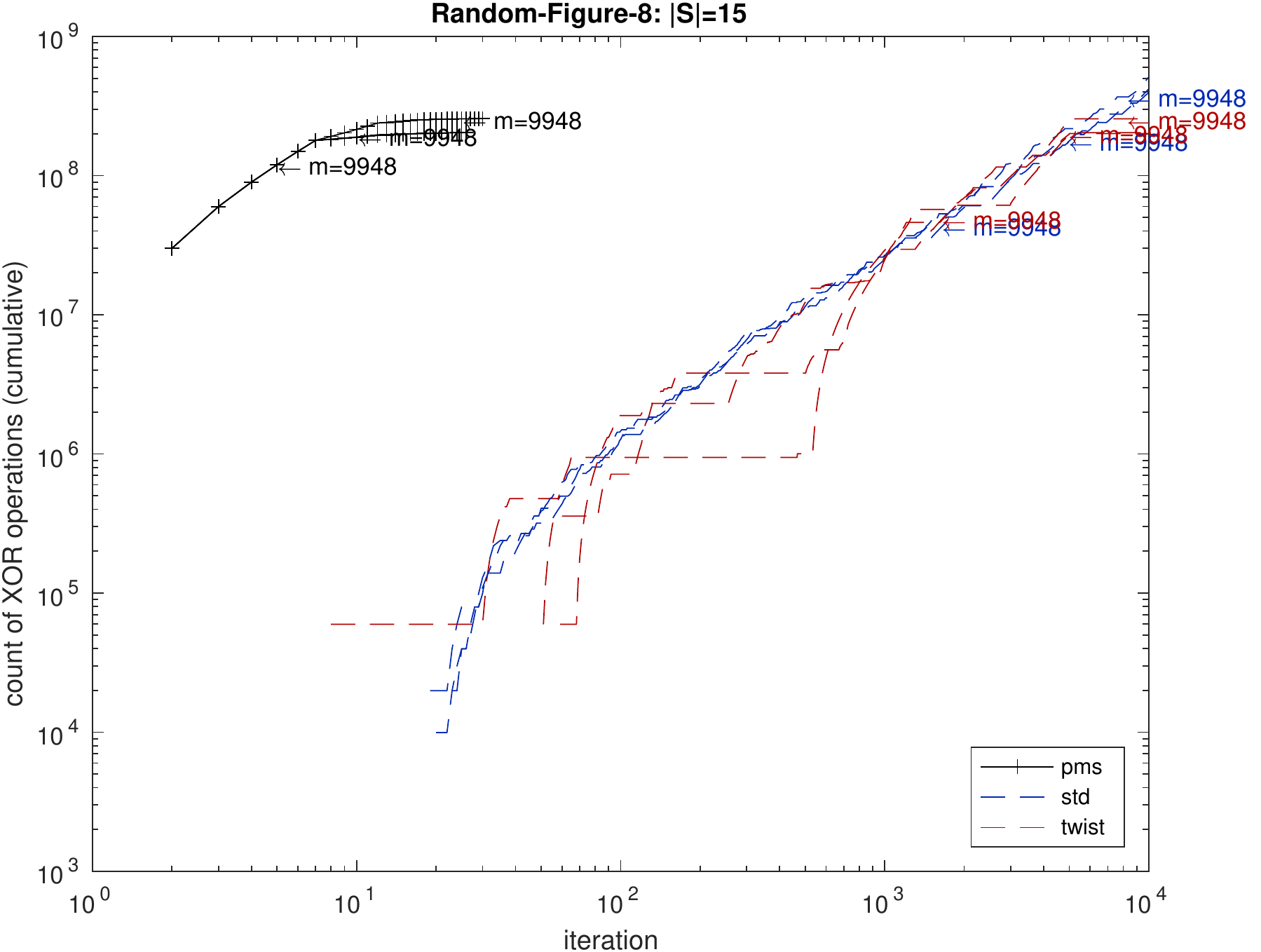}\label{fig:num_entry_adds_cumulative_f8}}\\

\subfloat[Trefoil Knot; Count of column additions][Trefoil Knot \\ Count of column additions]{\includegraphics[width=0.33\linewidth]{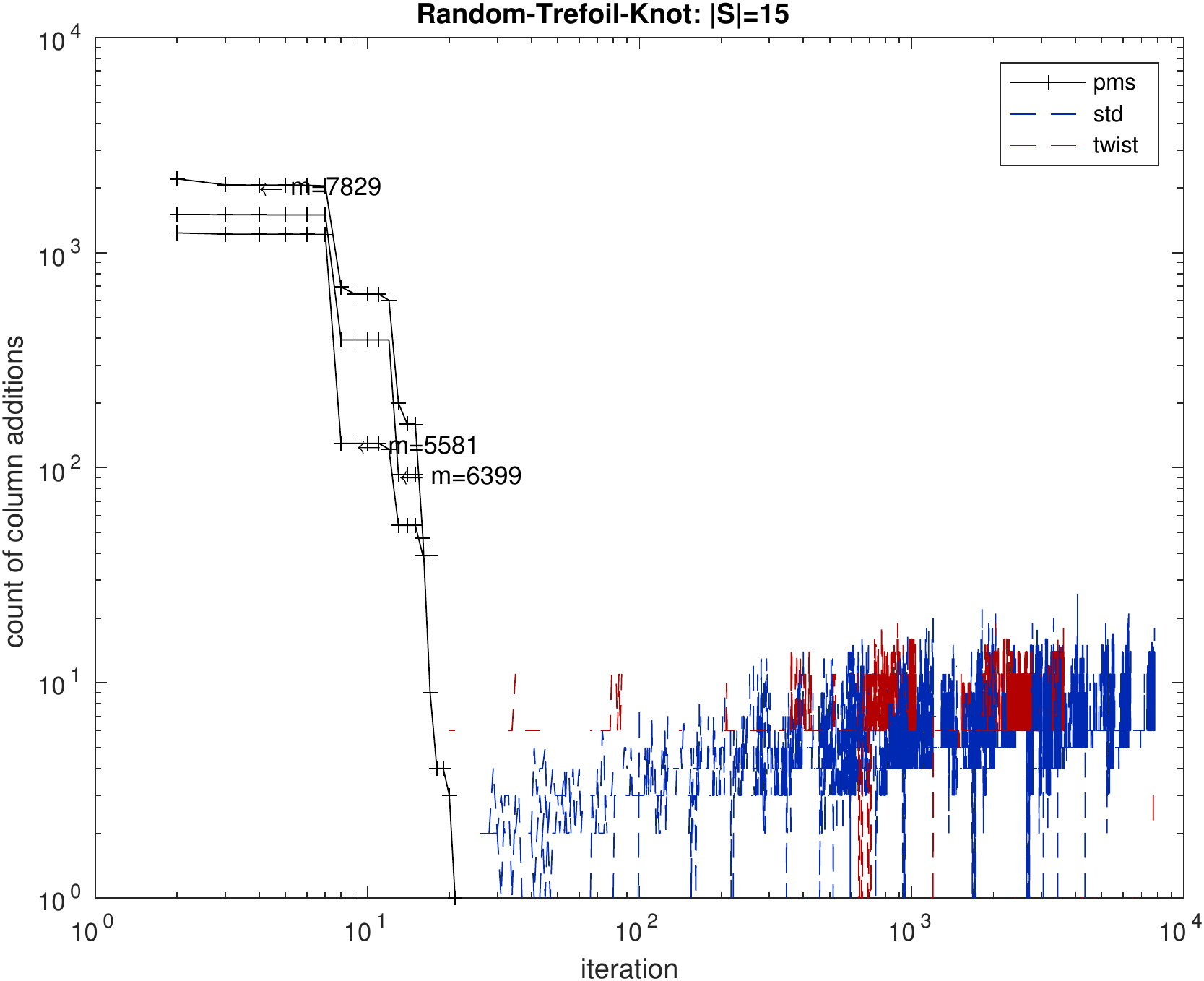}\label{fig:col_adds_trefoil}}
\subfloat[Trefoil Knot; Count of column additions (cumulative)][Trefoil Knot \\ Count of column additions (cumulative)]{\includegraphics[width=0.33\linewidth]{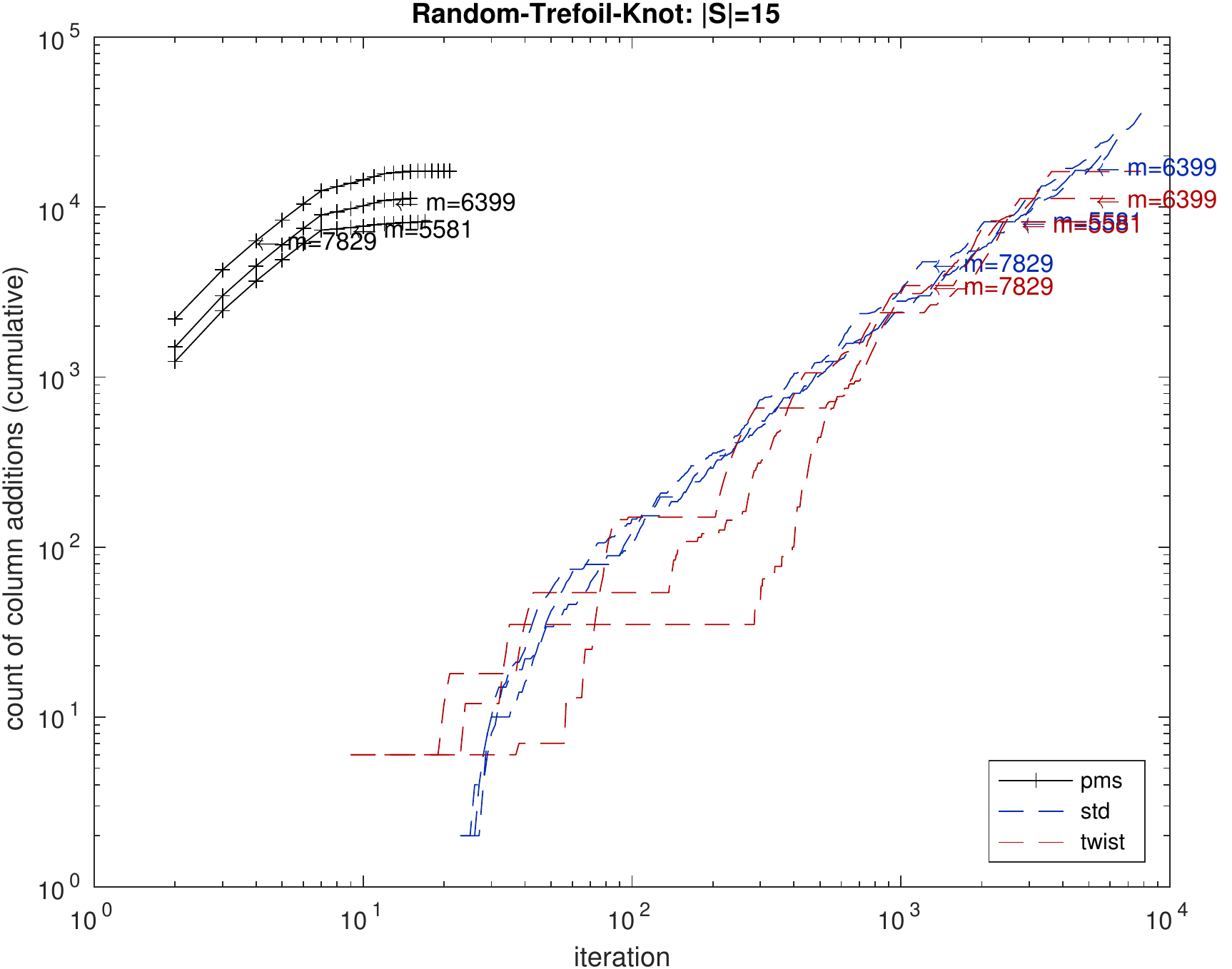}\label{fig:col_adds_cumulative_trefoil}}
\subfloat[Trefoil Knot; Count of {\tt XOR} operations (cumulative)][Trefoil Knot \\ Count of {\tt XOR} operations (cumulative)]{\includegraphics[width=0.33\linewidth]{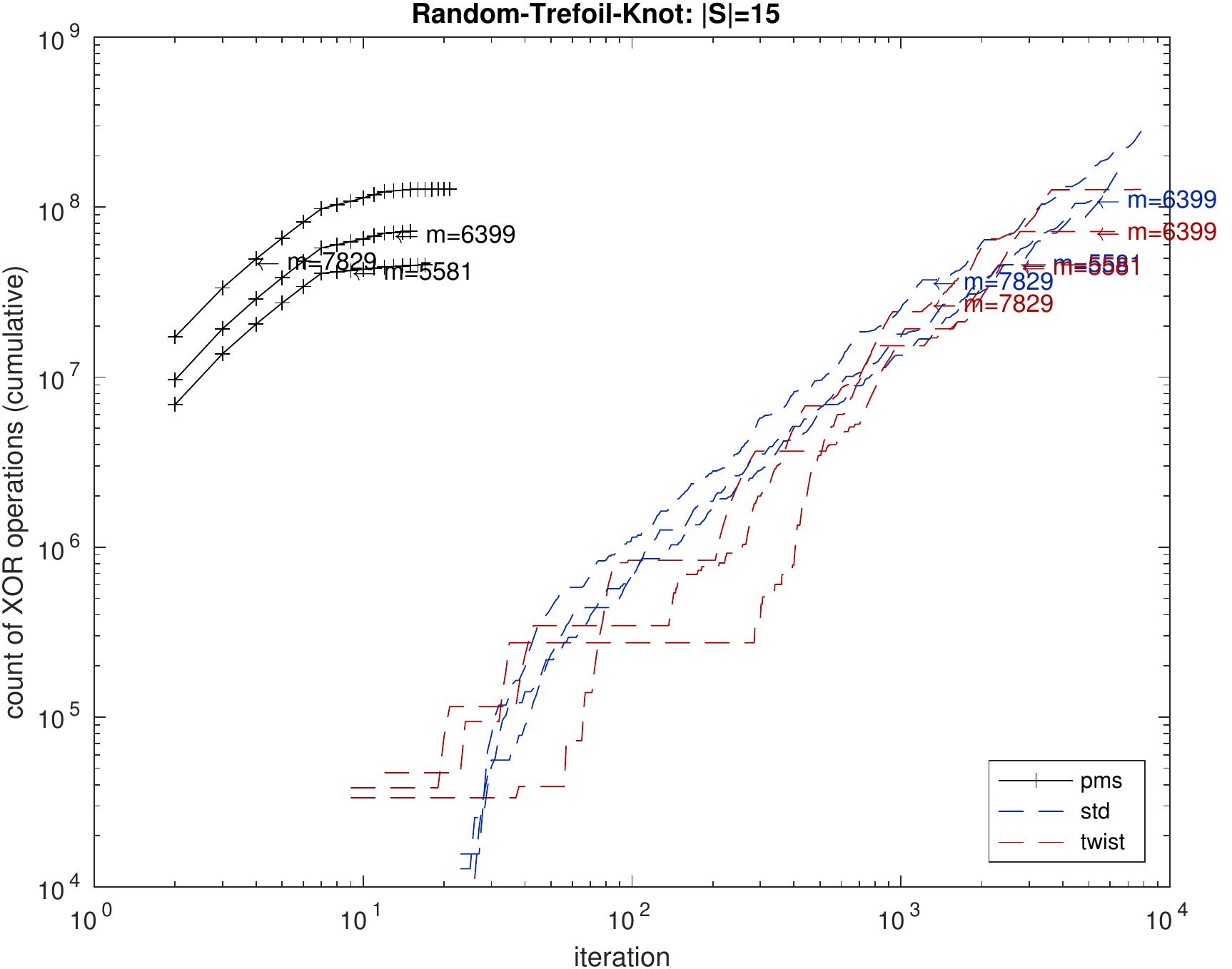}\label{fig:num_entry_adds_cumulative_trefoil}}\\

\subfloat[Sphere product; Count of column additions][Sphere product \\ Count of column additions]{\includegraphics[width=0.33\linewidth]{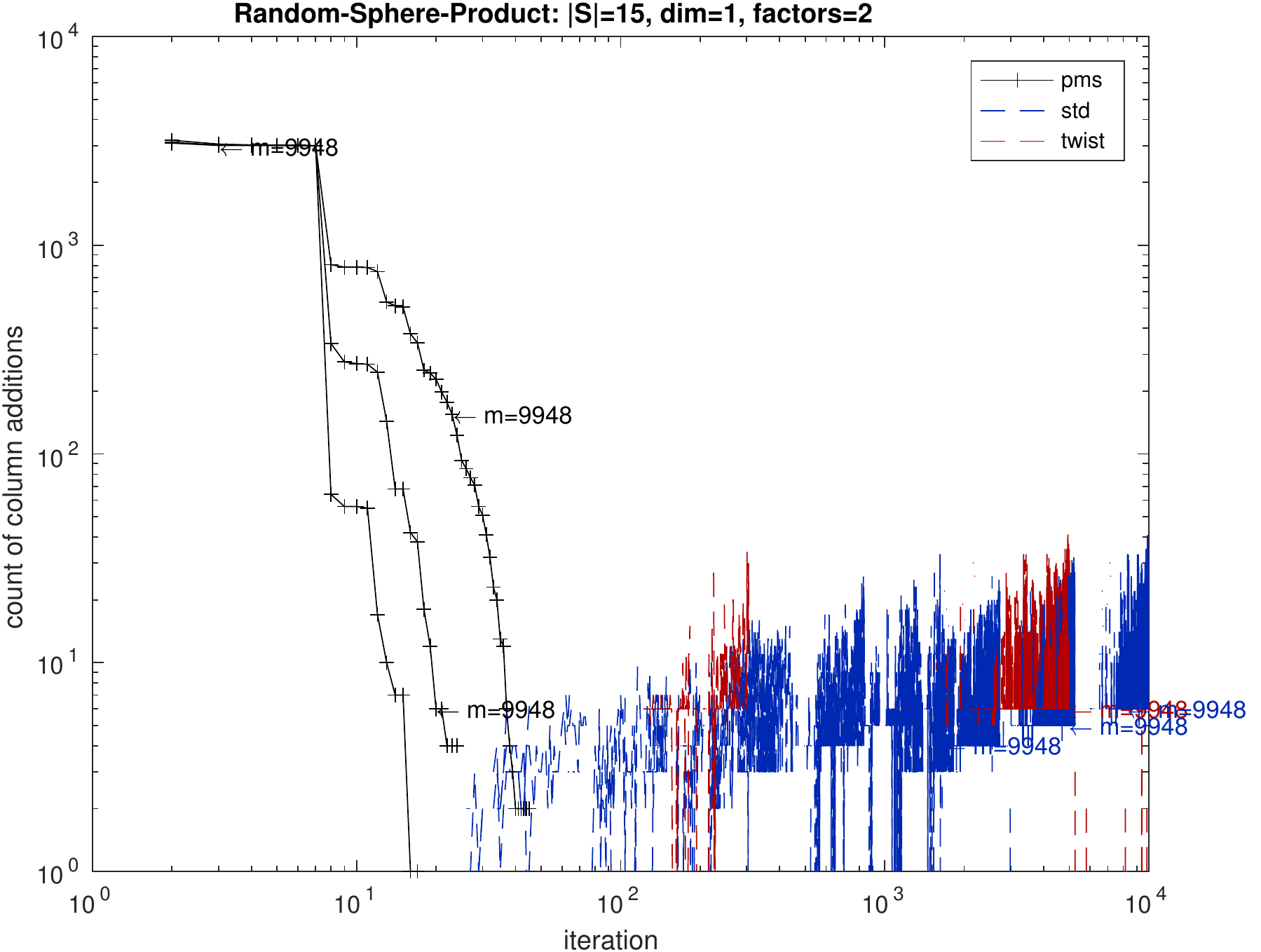}\label{fig:col_adds_sphere}}
\subfloat[Sphere product; Count of column additions (cumulative)][Sphere product \\ Count of column additions (cumulative)]{\includegraphics[width=0.33\linewidth]{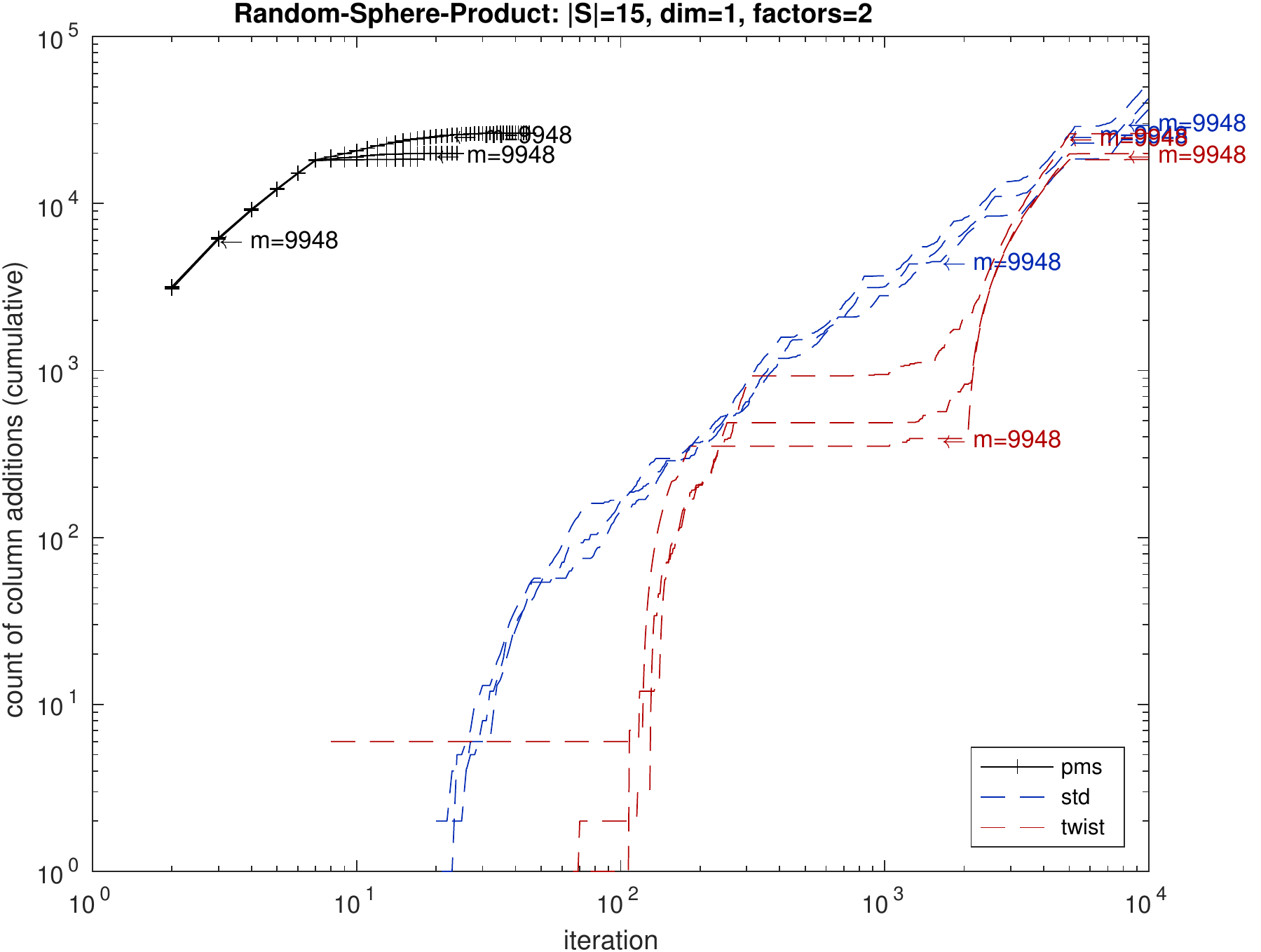}\label{fig:col_adds_cumulative_sphere}}
\subfloat[Sphere product; Count of {\tt XOR} operations (cumulative)][Sphere product \\ Count of {\tt XOR} operations (cumulative)]{\includegraphics[width=0.33\linewidth]{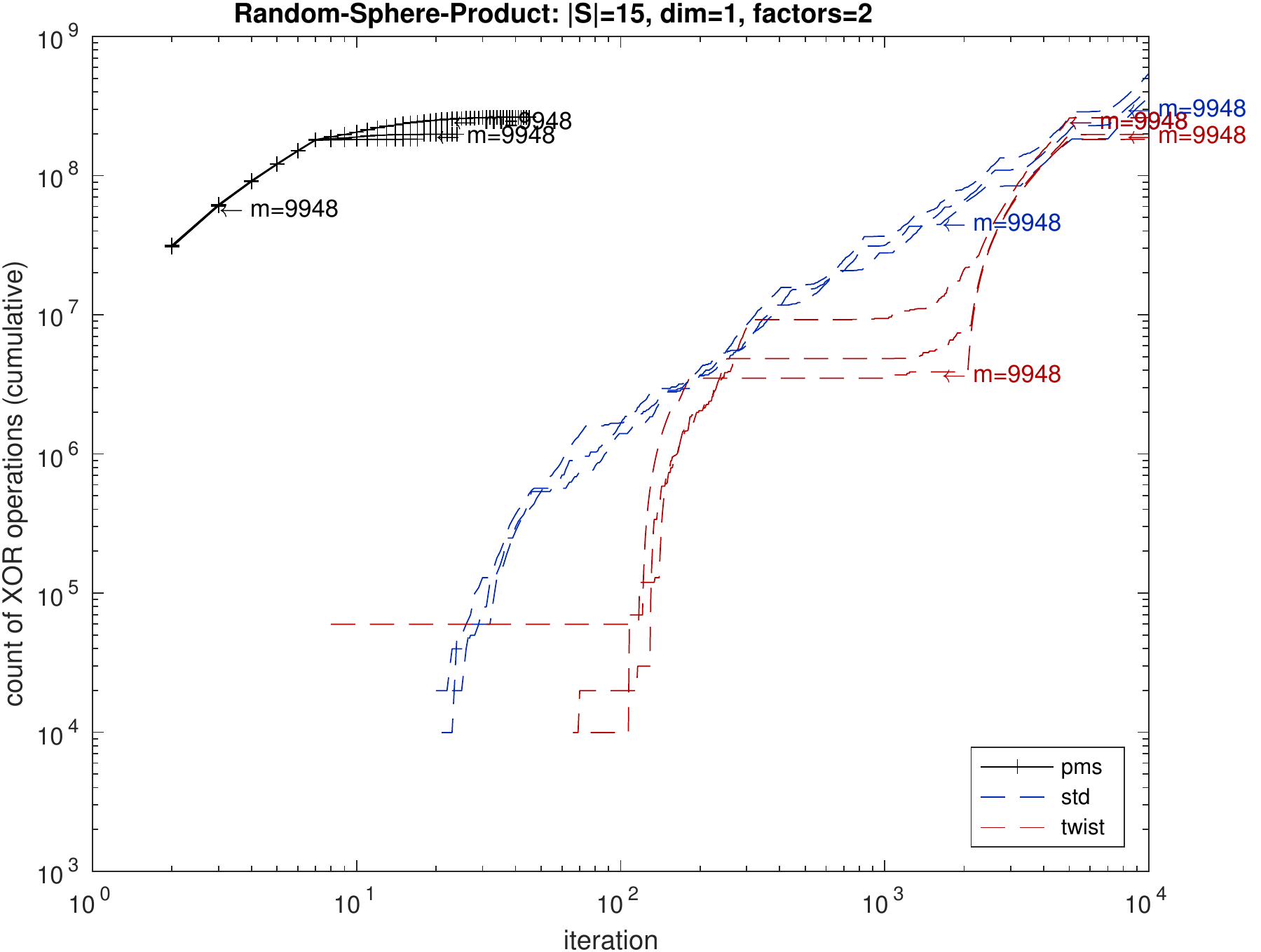}\label{fig:num_entry_adds_cumulative_sphere}}
\caption[Benchmarking on column operation overhead]{{\bf Benchmarking
    on column operation overhead.} For each ensemble, three point
  clouds are sampled and their corresponding simplicial complexes are
  reduced with Algs.\ \ref{alg:mat_red} - \ref{alg:twist}. Their
  performance is benchmarked in three different ways: through the
  number of column additions done {\em at each} iteration (Figs.\
  \ref{fig:col_adds_gaussian}, \ref{fig:col_adds_f8},
  \ref{fig:col_adds_trefoil}, \ref{fig:col_adds_sphere}); through
  the cumulative number of column additions {\em up to} a given
  iteration (Figs.\  \ref{fig:col_adds_cumulative_gaussian},
  \ref{fig:col_adds_cumulative_f8},
  \ref{fig:col_adds_cumulative_trefoil},
  \ref{fig:col_adds_cumulative_sphere}); and through the cumulative number of {\tt XOR} operations done {\em up to } a given iteration (Figs.\  \ref{fig:num_entry_adds_cumulative_gaussian}, \ref{fig:num_entry_adds_cumulative_f8}, \ref{fig:num_entry_adds_cumulative_trefoil}, \ref{fig:num_entry_adds_cumulative_sphere}).}
\label{fig:operation_count_benchmark}
\end{figure*}

\begin{small}
\begin{table*}
\centering
\begin{tabular}{l||cc|cc|cc|cc}
\toprule
\multirow{2}{*}{Sample} &
\multicolumn{2}{c}{Gaussian}&
\multicolumn{2}{c}{Figure-8}&
\multicolumn{2}{c}{Trefoil-knot}&
\multicolumn{2}{c}{Sphere-product}\\
& {std} & {twist} & {std} & {twist} & {std} & {twist} & {std} & {twist} \\
\midrule
1 & 0.52 & 1.00 & 0.49 & 1.00 & 0.46 & 1.00 & 0.50 & 1.00\\
2 & 0.52 & 1.00 & 0.51 & 1.00 & 0.40 & 1.00 & 0.49 & 1.01\\
3 & 0.50 & 1.00 & 0.50 & 1.00 & 0.44 & 1.00 & 0.47 & 1.00\\
\bottomrule
\end{tabular}
\caption{{\bf Ratio of total column additions.} For each ensemble,
  three point clouds are sampled and the corresponding simplicial
  complexes are reduced with each algorithm. The {\em ratio of total
    column additions} between Alg.\ \ref{alg:alpha_beta} and both
  Algs.\ \ref{alg:mat_red} and \ref{alg:twist} is reported.}
\label{tab:ratio_cumsum_operations}
\end{table*}
\end{small}

\subsection{$\lowstar$ is approximated at multiple scales}
\label{sec:lowstar_convergence}

Let $\low^{\ell}$ be the estimate of $\lowstar$ at the $\ell$-th
iteration of an algorithm.
We evaluate the quality of the approximation by computing the relative $\ell_1$-error as
\begin{equation}\label{eq:rel_error}
\error^{\ell} = \frac{\|\low^{\ell} - \lowstar\|_1}{\|\lowstar\|_1}.
\end{equation}
Fig.\ \ref{fig:l1_distance} illustrates an improved rate of reduction
in $\error^{k}$ per iteration as each of the algorithms progresses.
This is achieved without resorting to further prioritisation of this
reduction as described in Sec.\ \ref{sec:termination} as we are
simulating a number of processors in excess of the number of column
additions needed per iteration.  Tab.\ \ref{tab:iterations_l1_error}
shows the precise number of iterations each of Algs.\
\ref{alg:mat_red} - \ref{alg:twist} need to achieve
\eqref{eq:rel_error} less than $10^{-k}$ for $k=1,2,3,4$ and complete
reduction.  Tab.\ \ref{tab:iterations_l1_error} shows the rapid
reduction of \eqref{eq:rel_error} by Alg.\ \ref{alg:alpha_beta}
compared to only appreciable reduction by Algs.\ \ref{alg:mat_red} and
\ref{alg:twist} near their complete reductions; e.g. wheras a relative
reduction of \eqref{eq:rel_error} to $1\%$ is achieved by Alg.\
\ref{alg:alpha_beta}
in less than 20 iterations, Algs.\ \ref{alg:mat_red} and \ref{alg:twist}
take approximately ten and nine thousand iterations respectively for
the same reduction.
Fig. \ref{fig:percentage_unreduced} and Tab.\
\ref{tab:iterations_unreduced} show a similarly early decrease in the 
fraction of the number of columns which are fully reduced.
Remarkably, Alg.\ \ref{alg:alpha_beta} has reduced at least $50\%$ of
the columns in only two iterations, and all but $1\%$ within no more
than 20 iterations; this is in contrast with between approximately two
and a half and ten thousand iterations  for compreable reductions by
Algs.\ \ref{alg:mat_red} and \ref{alg:twist}.
%
%

\begin{figure*}[!htbp]
	\centering
\subfloat[Gaussian]{\includegraphics[width=0.50\linewidth]{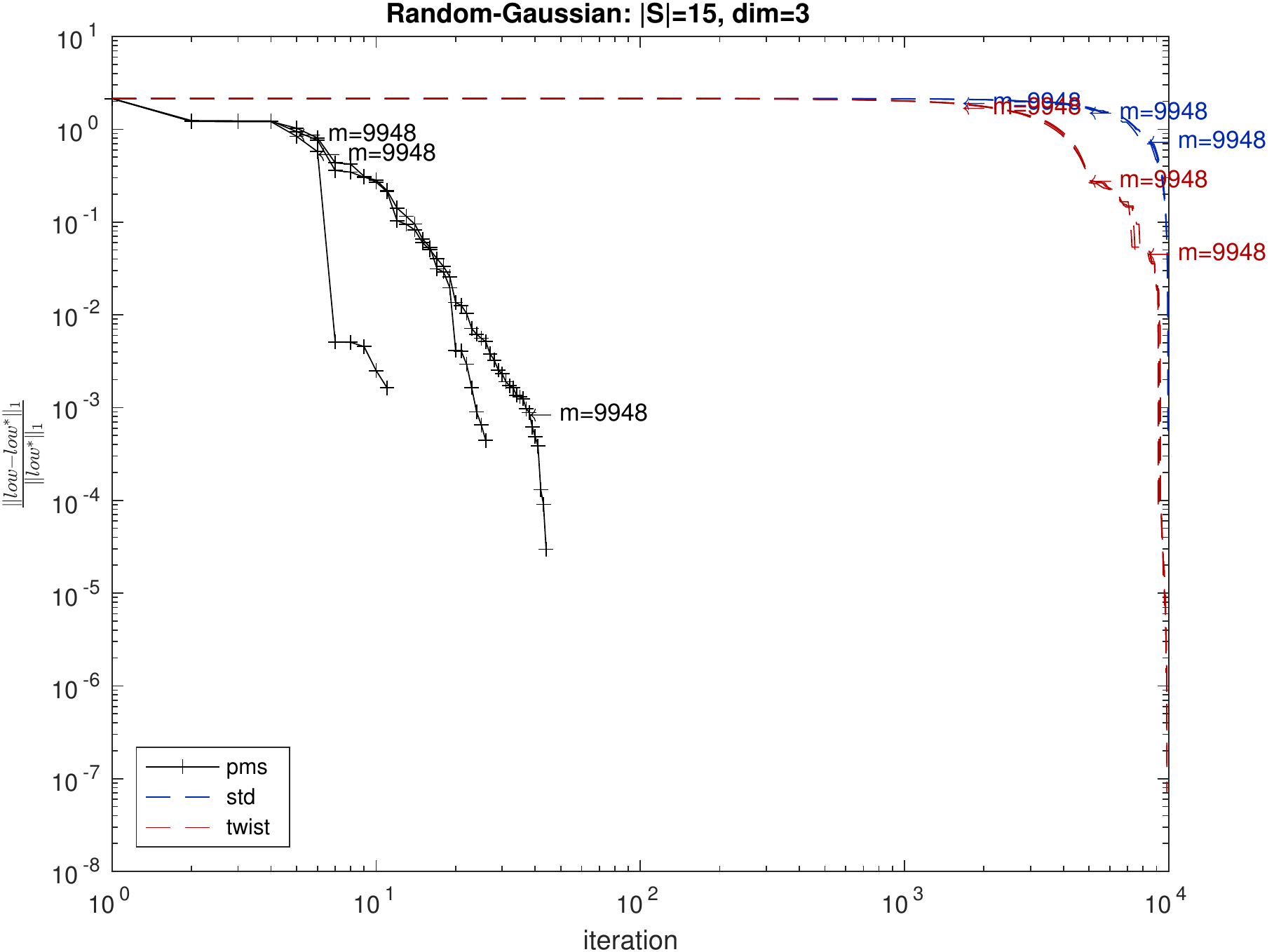}\label{fig:l1_gaussian}}
\subfloat[Figure-8]{\includegraphics[width=0.50\linewidth]{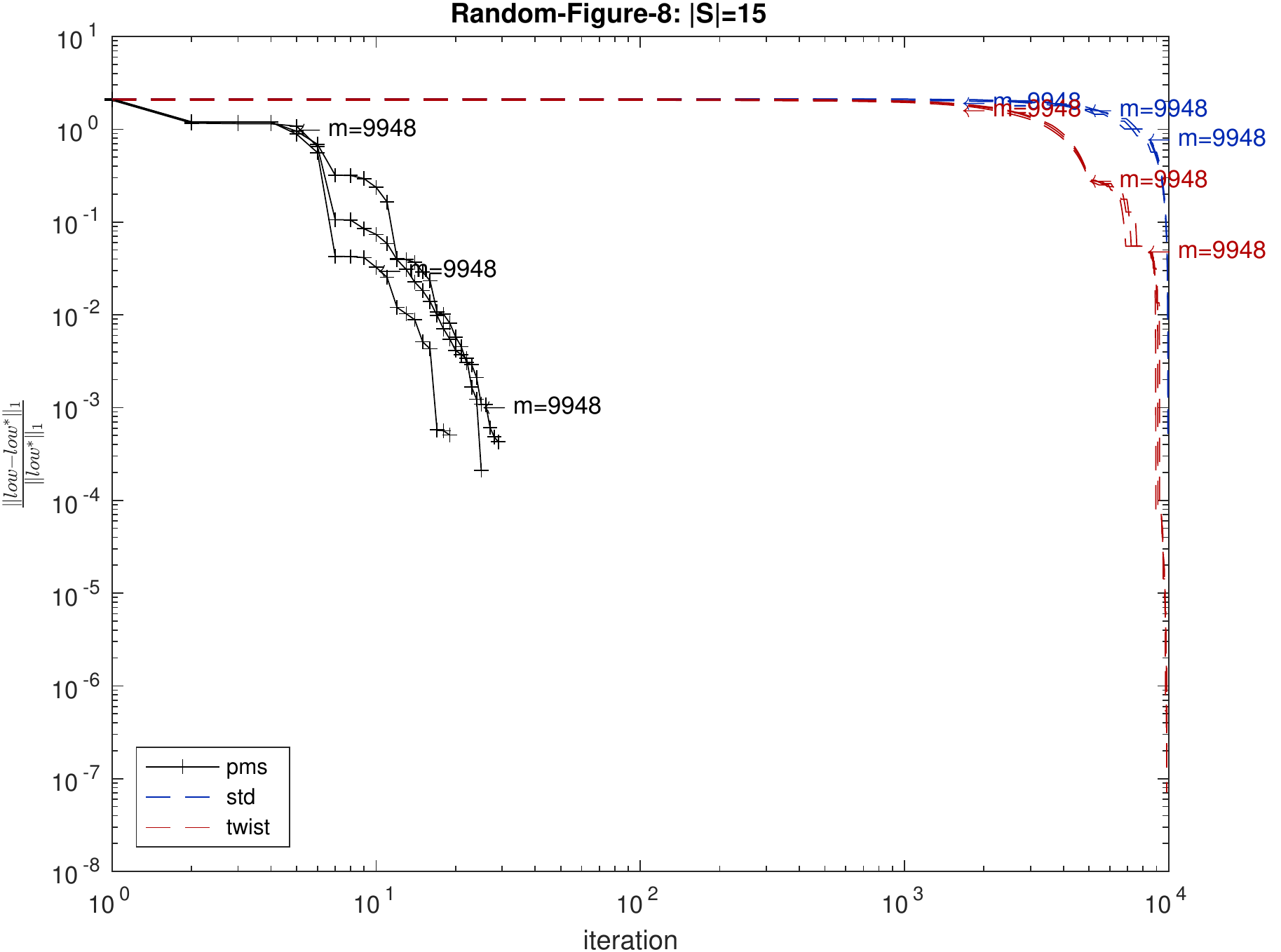}\label{fig:l1_f8}}\\
\subfloat[Trefoil Knot]{\includegraphics[width=0.50\linewidth]{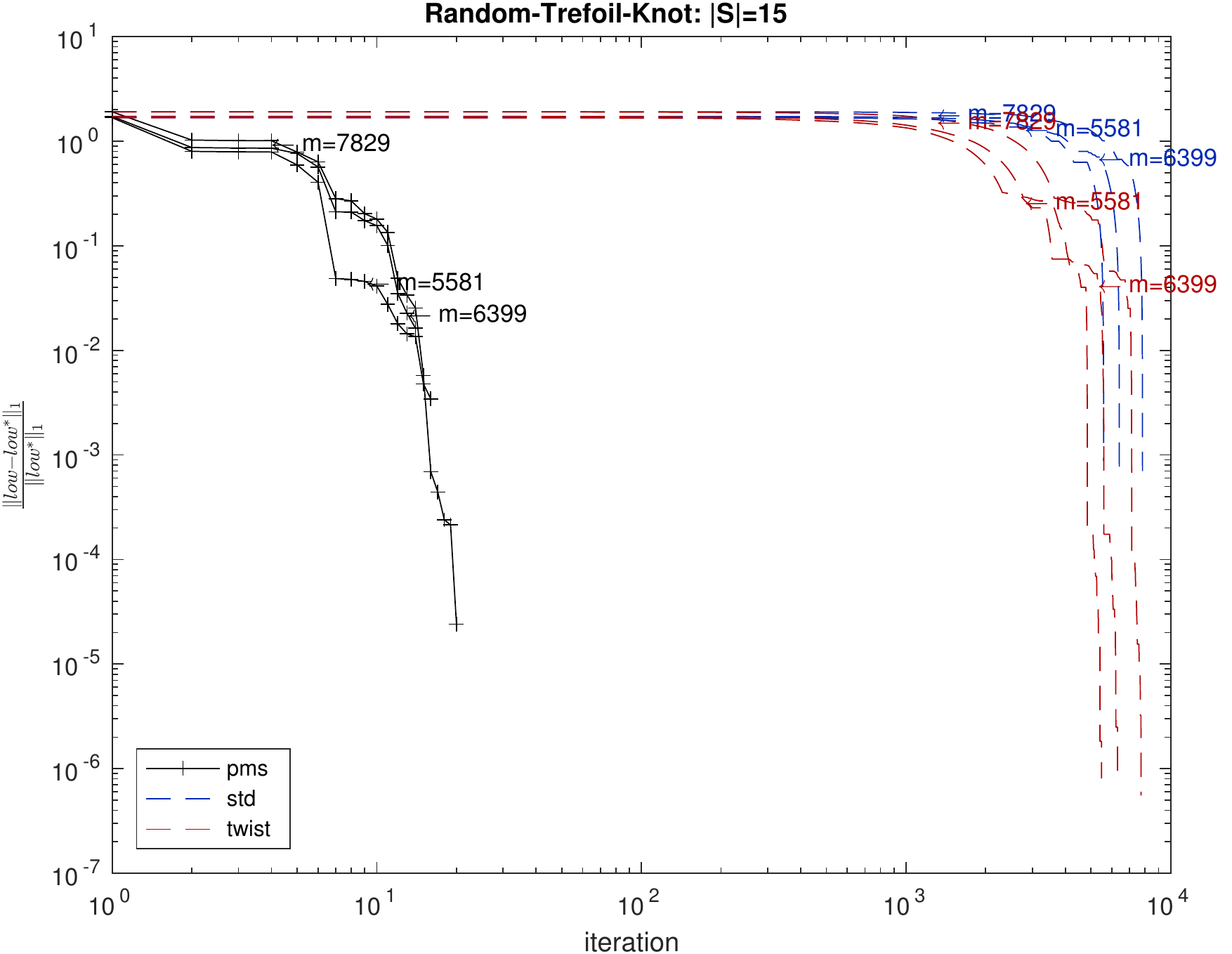}\label{fig:l1_trefoil}}
\subfloat[Sphere product]{\includegraphics[width=0.50\linewidth]{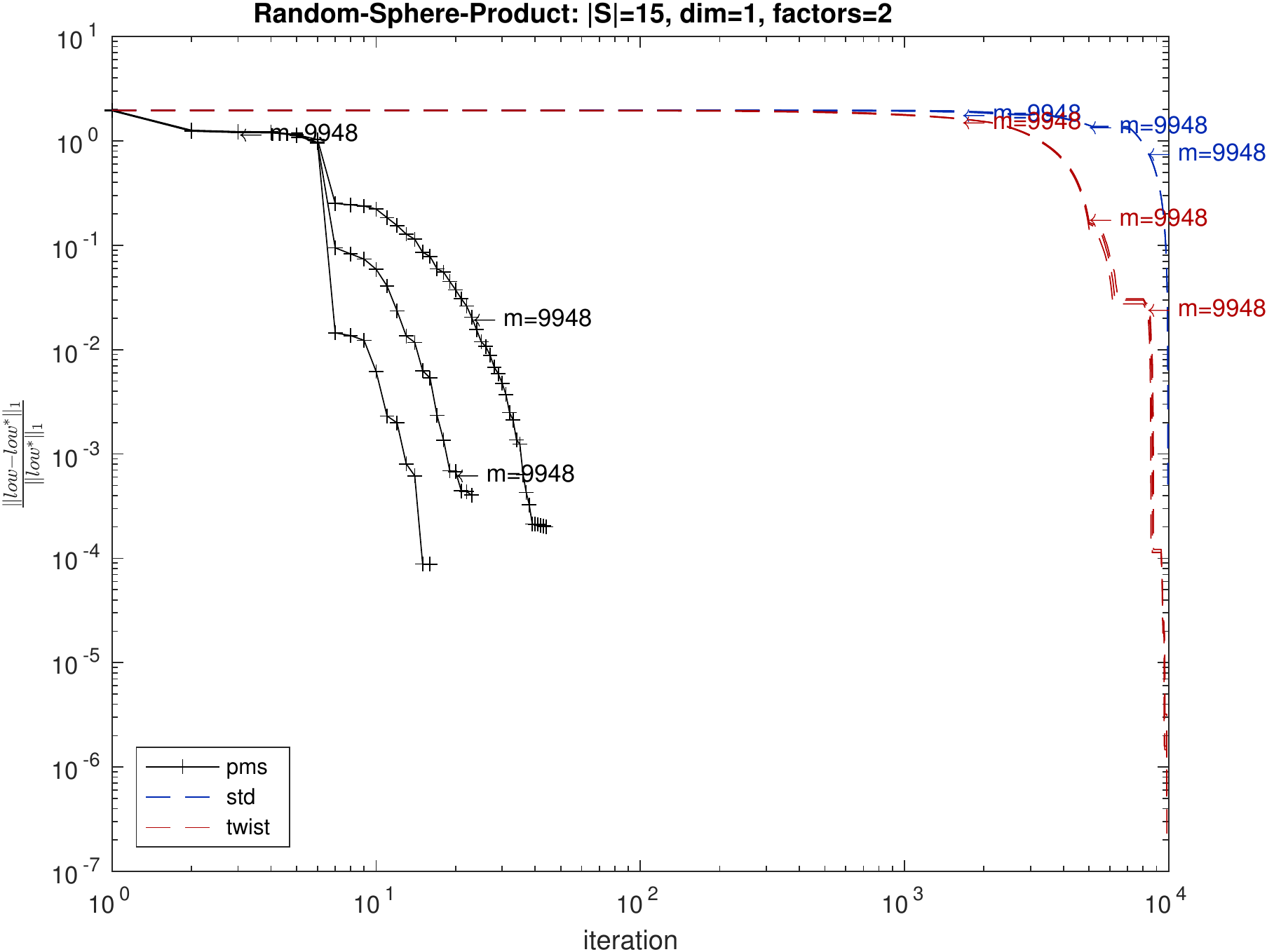}\label{fig:l1_sphere}}
\caption[Relative $\ell_1$-error per iteration]{{\bf Relative
    $\ell_1$-error per iteration.} For each ensemble, three point
  clouds are sampled and the corresponding simplicial complexes are
  reduced with each of the algorithms. The relative $\ell_1$-error
  between $\low$ and $\lowstar$, as given in \eqref{eq:rel_error}, is tracked.}
	\label{fig:l1_distance}
\end{figure*}

\begin{small}
\begin{table*}
\centering
\begin{tabular}{l||ccc|ccc|ccc|ccc}
\toprule
\multirow{2}{*}{$\frac{\|\low - \lowstar\|_1}{\|\lowstar\|_1}$} &
\multicolumn{3}{c}{Gaussian}&
\multicolumn{3}{c}{Figure-8}&
\multicolumn{3}{c}{Trefoil-knot}&
\multicolumn{3}{c}{Sphere-product}\\
& {pms} & {std} & {twist} & {pms} & {std} & {twist} & {pms} & {std} & {twist} & {pms} & {std} & {twist} \\
\midrule
0.1 & 14 & 9786 & 7366 & 9 & 9756 & 6759 & 12 & 7687 & 5519 & 7 & 9751 & 5589\\
0.01 & 20 & 9933 & 9180 & 17 & 9930 & 8916 & 15 & 7816 & 7087 & 10 & 9930 & 8485\\
0.001 & 24 & 9948 & 9197 & 25 & 9948 & 8942 & 16 & 7829 & 7122 & 13 & 9948 & 8564\\
0.0001 & 27 & 9949 & 9198 & 26 & 9949 & 8944 & 20 & 7830 & 7309 & 15 & 9949 & 9388\\
0 & 27 & 9949 & 9858 & 26 & 9949 & 9869 & 21 & 7830 & 7732 & 17 & 9949 & 9840\\
\bottomrule
\end{tabular}
\caption{{\bf Iterations to relative $\ell_1$-error.} For each
  ensemble, one point cloud is sampled and the corresponding
  simplicial complex is reduced with each algorithm. The number of
  iterations to achieve a given {\em relative $\ell_1$-error level}
  between $\low$ and $\lowstar$, as given by \eqref{eq:rel_error}, is
  reported for of Algs.\ \ref{alg:mat_red} - \ref{alg:twist}.} 
\label{tab:iterations_l1_error}
\end{table*}
\end{small}

\begin{figure*}[!htbp]
	\centering
\subfloat[Gaussian]{\includegraphics[width=0.50\linewidth]{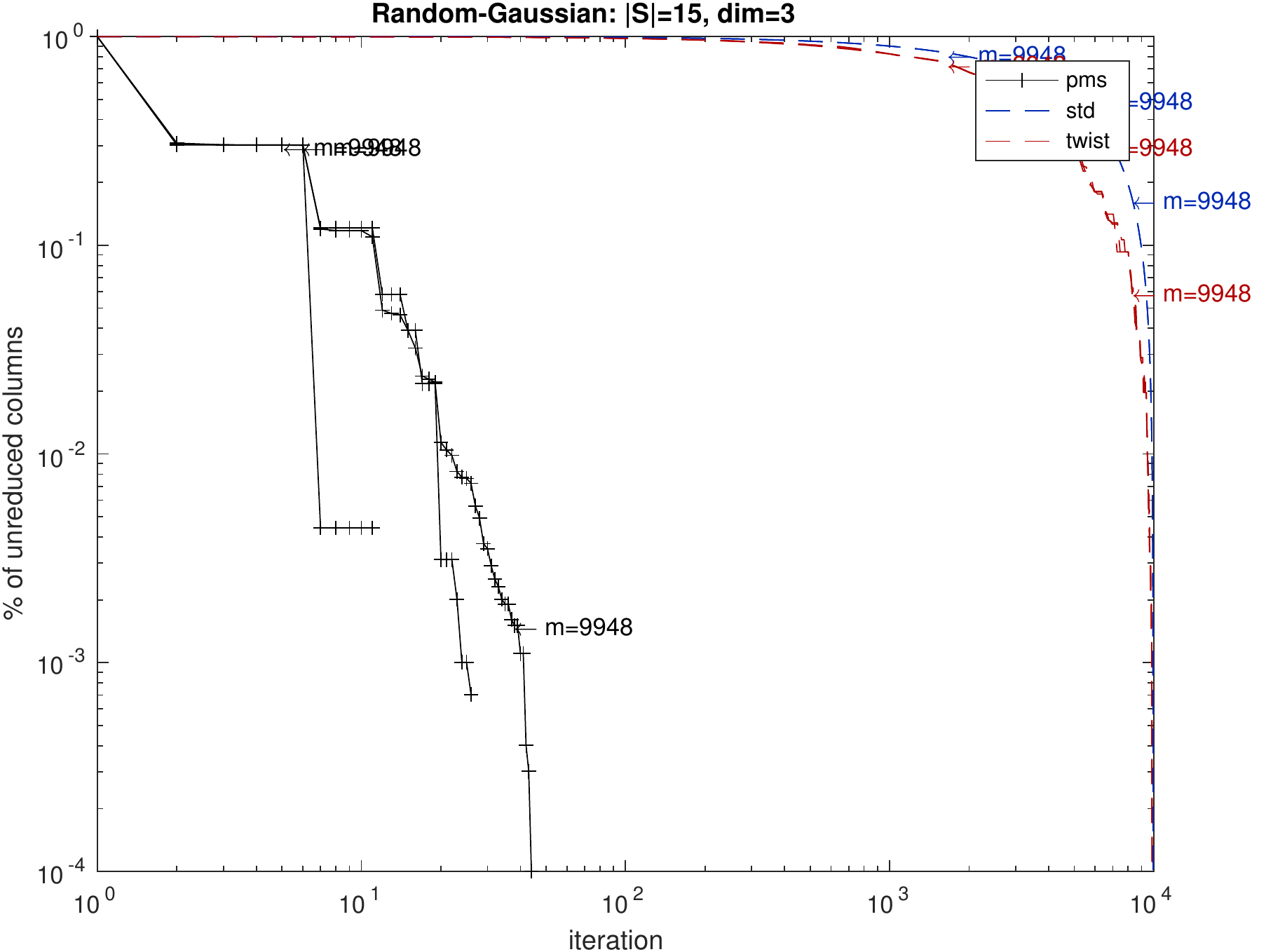}\label{fig:percentage_unreduced_gaussian}}
\subfloat[Figure-8]{\includegraphics[width=0.50\linewidth]{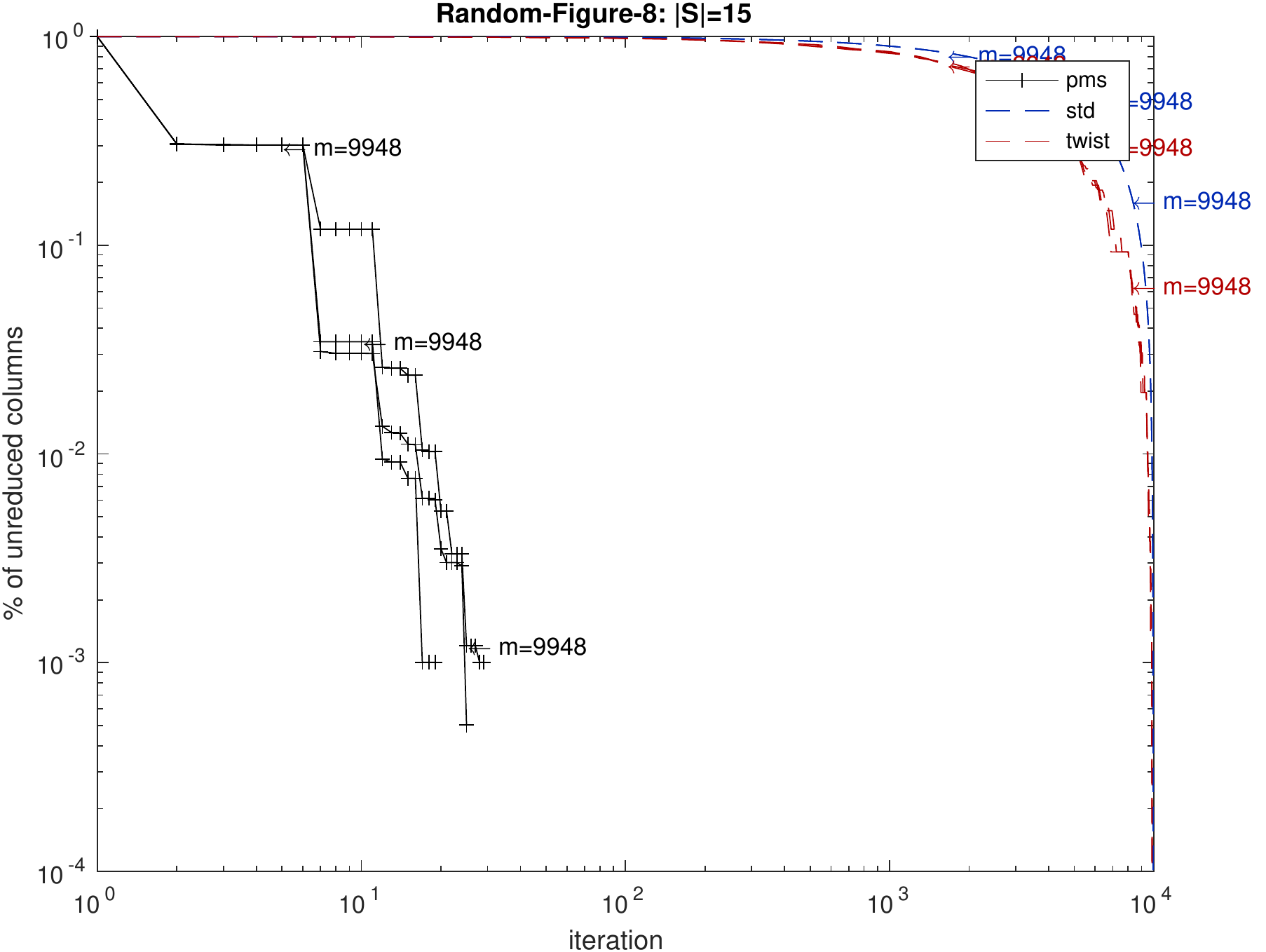}\label{fig:percentage_unreduced_f8}}\\
\subfloat[Trefoil Knot]{\includegraphics[width=0.50\linewidth]{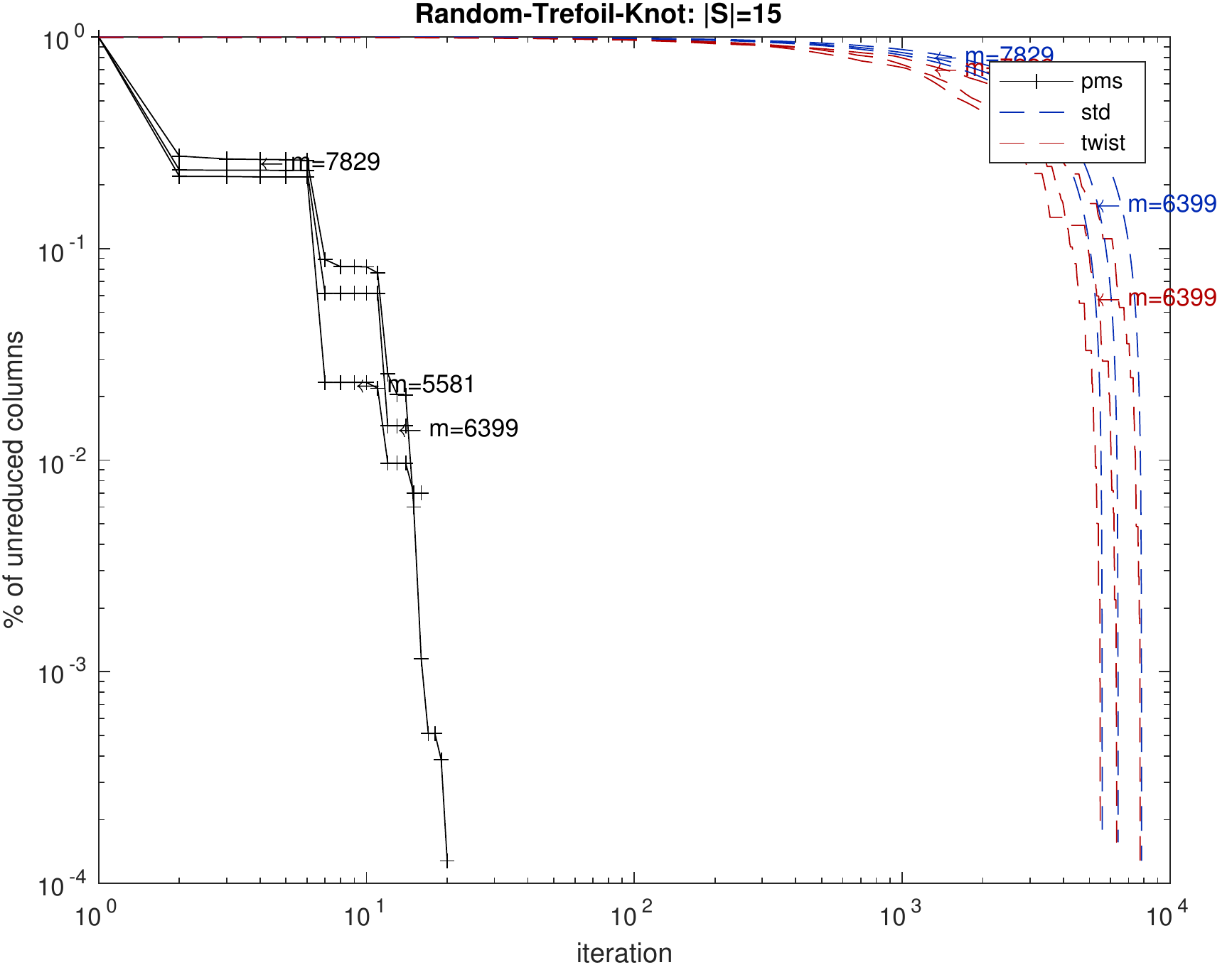}\label{fig:percentage_unreduced_trefoil}}
\subfloat[Sphere product]{\includegraphics[width=0.50\linewidth]{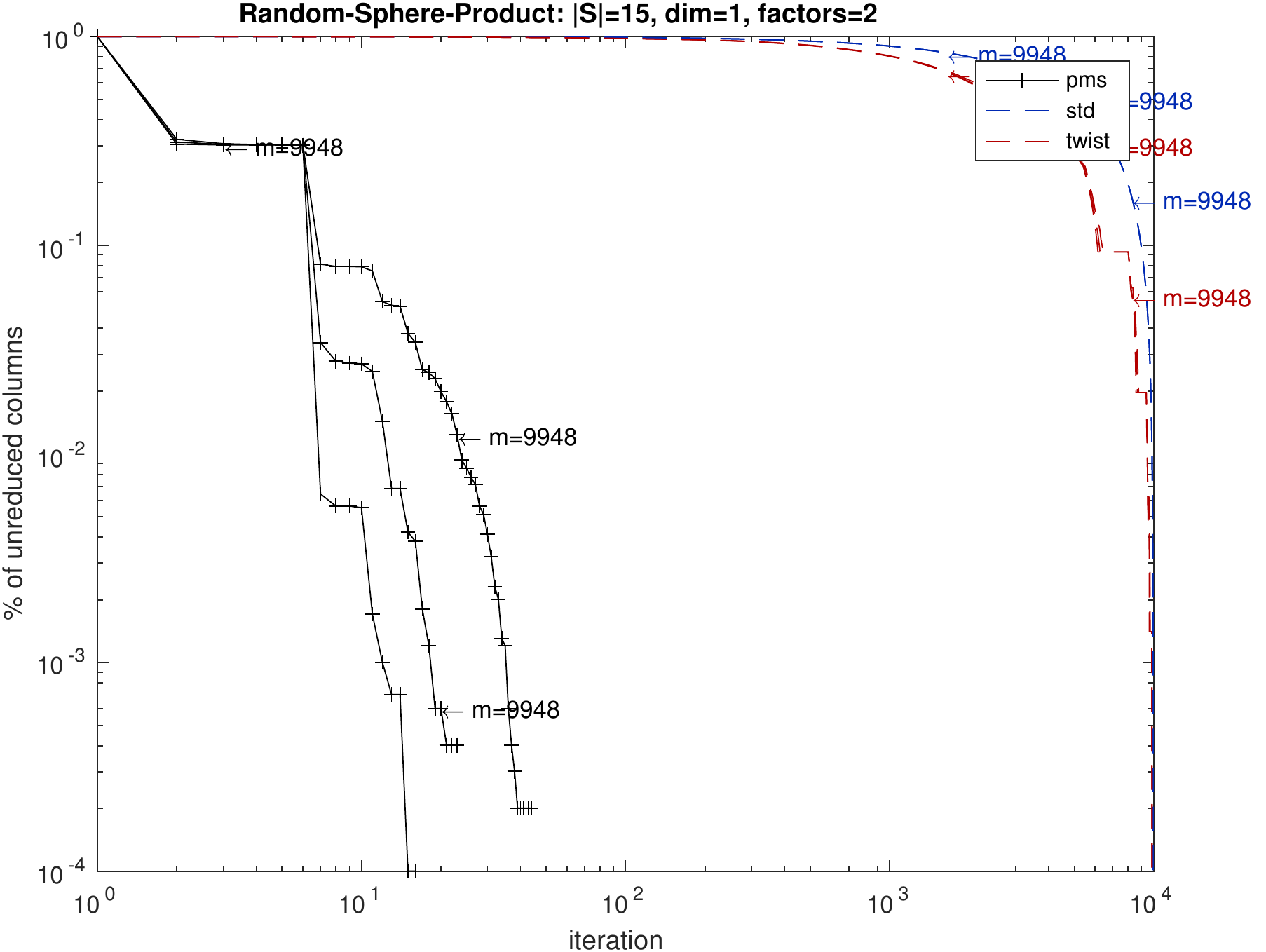}\label{fig:percentage_unreduced_sphere}}
\caption[Proportion of unreduced columns per iteration]{{\bf
    Proportion of unreduced columns per iteration.} For each ensemble,
  three point clouds are sampled and the corresponding simplicial
  complexes are reduced with each of Algs.\ \ref{alg:mat_red} -
  \ref{alg:twist}. The proportion of unreduced columns is presented at
  each iteration.} 
	\label{fig:percentage_unreduced}
\end{figure*}

\begin{small}
\begin{table*}
\centering
\begin{tabular}{l || ccc | ccc | ccc | ccc }
\toprule
\multirow{2}{*}{Proportion} &
\multicolumn{3}{c}{Gaussian}&
\multicolumn{3}{c}{Figure-8}&
\multicolumn{3}{c}{Trefoil-knot}&
\multicolumn{3}{c}{Sphere-product}\\
& {pms} & {std} & {twist} & {pms} & {std} & {twist} & {pms} & {std} & {twist} & {pms} & {std} & {twist} \\
\midrule
0.90 & 2 & 996 & 528 & 2 & 996 & 499 & 2 & 784 & 392 & 2 & 996 & 519\\
0.50 & 2 & 4975 & 3405 & 2 & 4975 & 2974 & 2 & 3916 & 2536 & 2 & 4975 & 2958\\
0.10 & 12 & 8955 & 7424 & 7 & 8955 & 6849 & 7 & 7048 & 6140 & 7 & 8955 & 6115\\
0.05 & 15 & 9452 & 8432 & 7 & 9452 & 8618 & 12 & 7439 & 6738 & 7 & 9452 & 8404\\
0.01 & 20 & 9850 & 9471 & 17 & 9850 & 9494 & 15 & 7752 & 7427 & 7 & 9850 & 9461\\
\bottomrule
\end{tabular}
\caption{{\bf Iterations to unreduced percentage.} For each ensemble,
  one point cloud is sampled and the corresponding simplicial complex
  is reduced with each of Algs.\ \ref{alg:mat_red} -
  \ref{alg:twist}. The number of iterations to achieve a given {\em
    proportion of unreduced columns} is presented for each algorithm.} 
\label{tab:iterations_unreduced}
\end{table*}
\end{small}

\subsection{The set $\essential$ can be reliably estimated after a few iterations}
\label{sec:essential_convergence}

Finally, we show how the efficacy of Lemma
\ref{lemma:essential_estimation} in estimating the the set of
essential columns.
%
Let $E^{\ell}$ be defined as in \eqref{eq:essential_iteration}, by Lemma \ref{lemma:essential_estimation}, $\essential \subset E^{\ell}$ for every $\ell$.
If $E^{\ell}$ is our estimation at iteration $\ell$, then the number of true positives, false positives and false negatives in the estimation is, respectively,
\begin{align}
\mbox{TP} &= |\essential \cap E^{\ell}|  = |\essential|, \nonumber\\
\mbox{FP} &= |E^{\ell}\setminus \essential| = |E^{\ell}| - |\essential|, \nonumber\\
\mbox{FN} &= |\emptyset| = 0  \nonumber.
\end{align}
To evaluate the quality of the estimation, we compute the {\em
  precision} as defined by
\begin{align*}
\mbox{precision} &= \frac{\mbox{TP}}{\mbox{TP} + \mbox{FP}} = \frac{|\essential|}{|E^{\ell}|} \nonumber
\end{align*}
and note that the {\em recall} $= \frac{\mbox{TP}}{\mbox{TP} +
  \mbox{FN}}$ is equal to 1 due to the estimate giving no false negatives.
Fig.\ \ref{fig:ee_precision} and Tab.\ \ref{tab:iterations_essential} show
the remarkably few iterations 
needed by Alg.\ \ref{alg:alpha_beta} to achieve precision near one
while Algs.\ \ref{alg:mat_red} and \ref{alg:twist} show increase in the
precision to one only in the later iterations.  Specifically, Alg.\
\ref{alg:alpha_beta} achieves precision of $95\%$ with two iterations
and complete precision within 8 iterations whereas Algs.\
\ref{alg:mat_red} and \ref{alg:twist} require between seven and ten
thousand iterations for similar precisions.
%

\begin{figure*}[!htbp]
	\centering
\subfloat[Gaussian]{\includegraphics[width=0.50\linewidth]{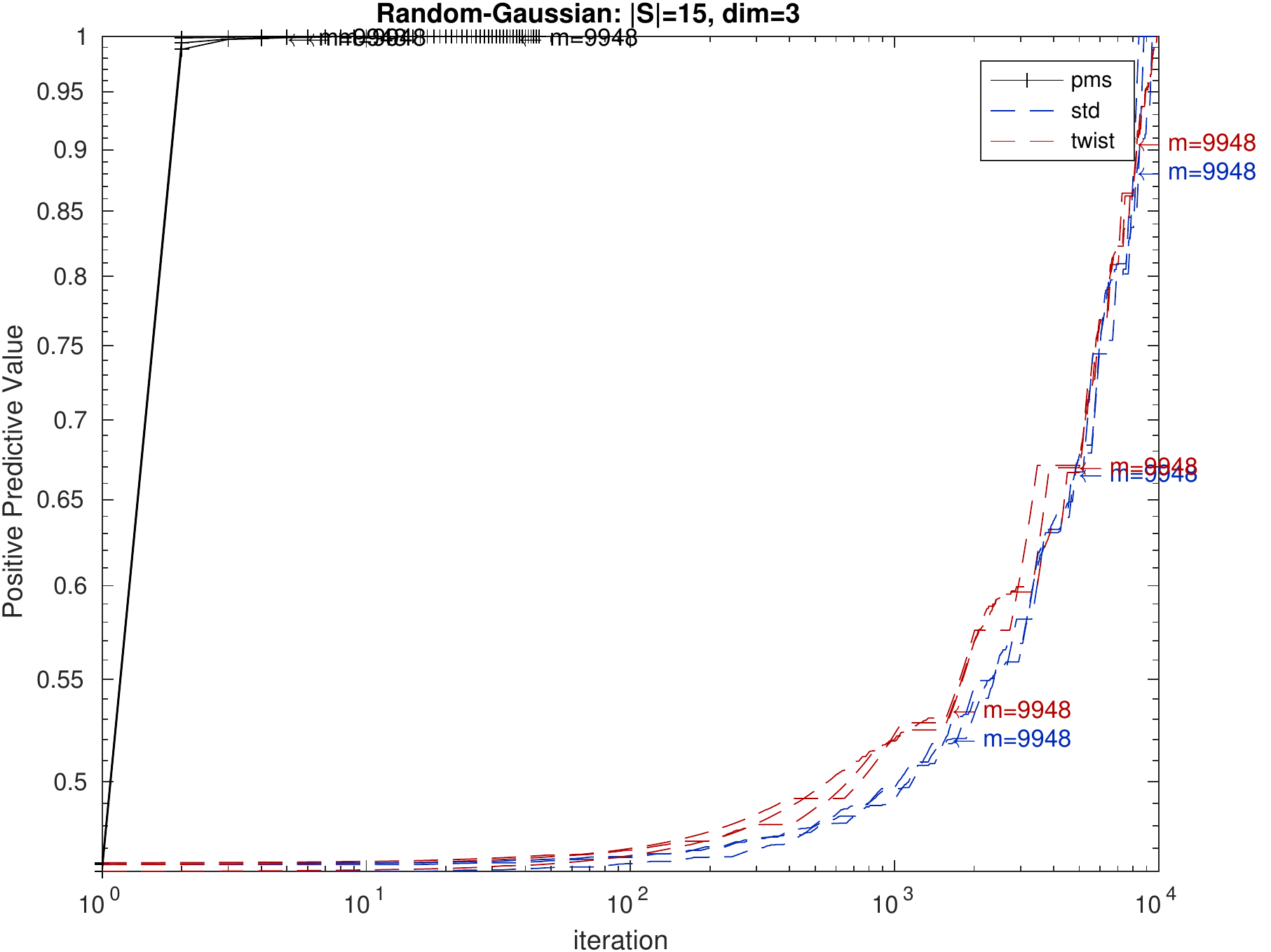}\label{fig:ee_gaussian}}
\subfloat[Figure-8]{\includegraphics[width=0.50\linewidth]{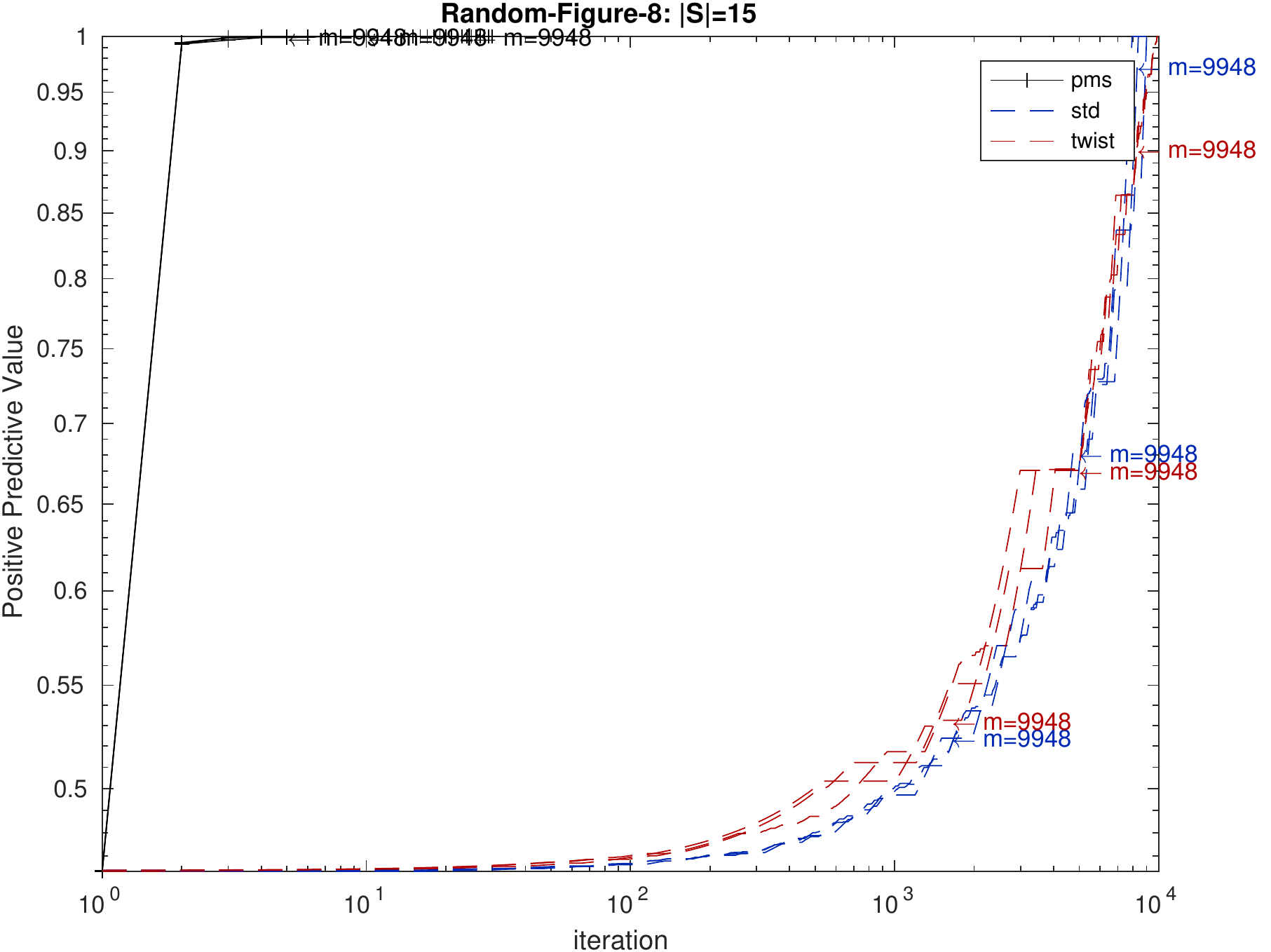}\label{fig:ee_f8}}\\
\subfloat[Trefoil Knot]{\includegraphics[width=0.50\linewidth]{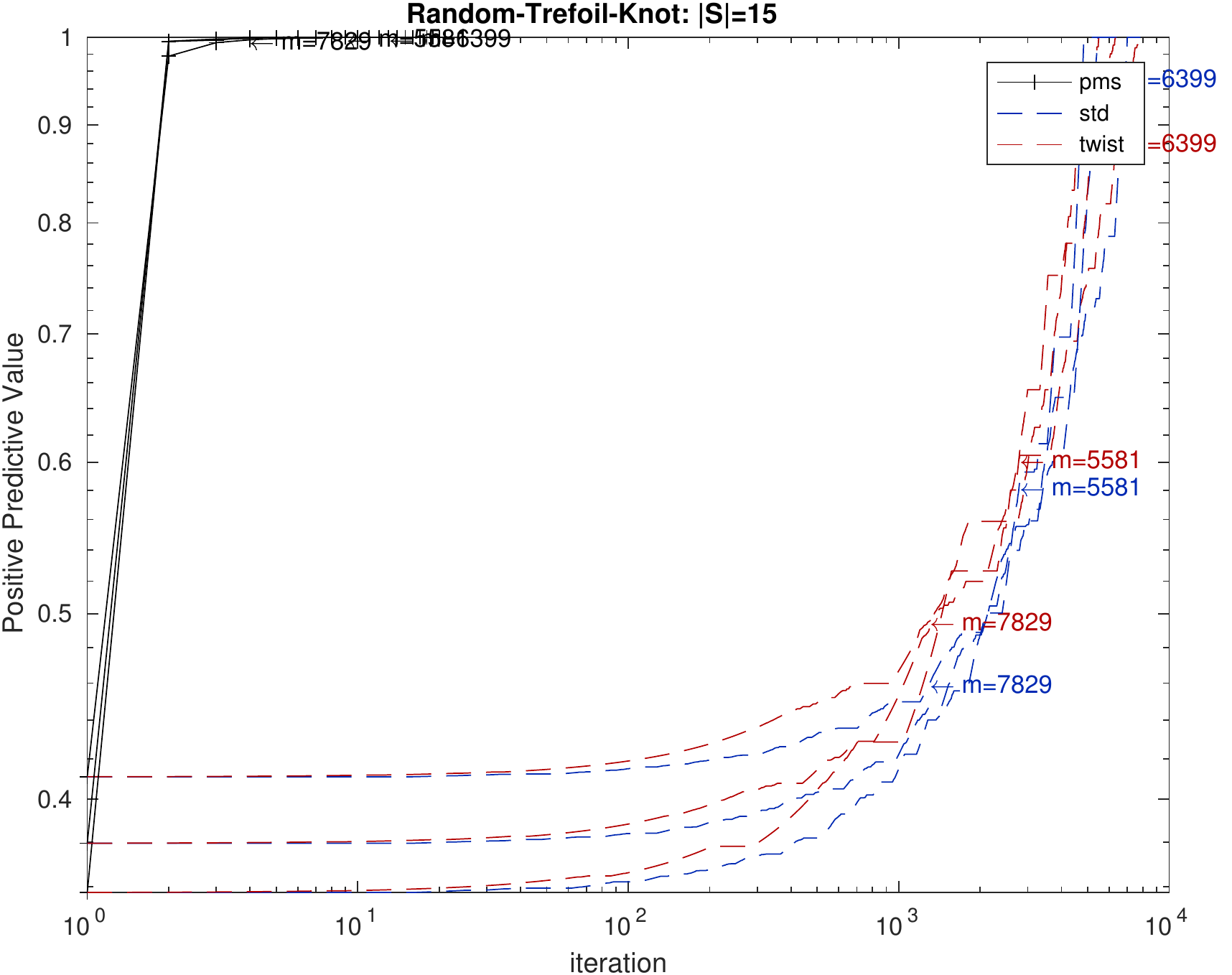}\label{fig:ee_trefoil}}
\subfloat[Sphere product]{\includegraphics[width=0.50\linewidth]{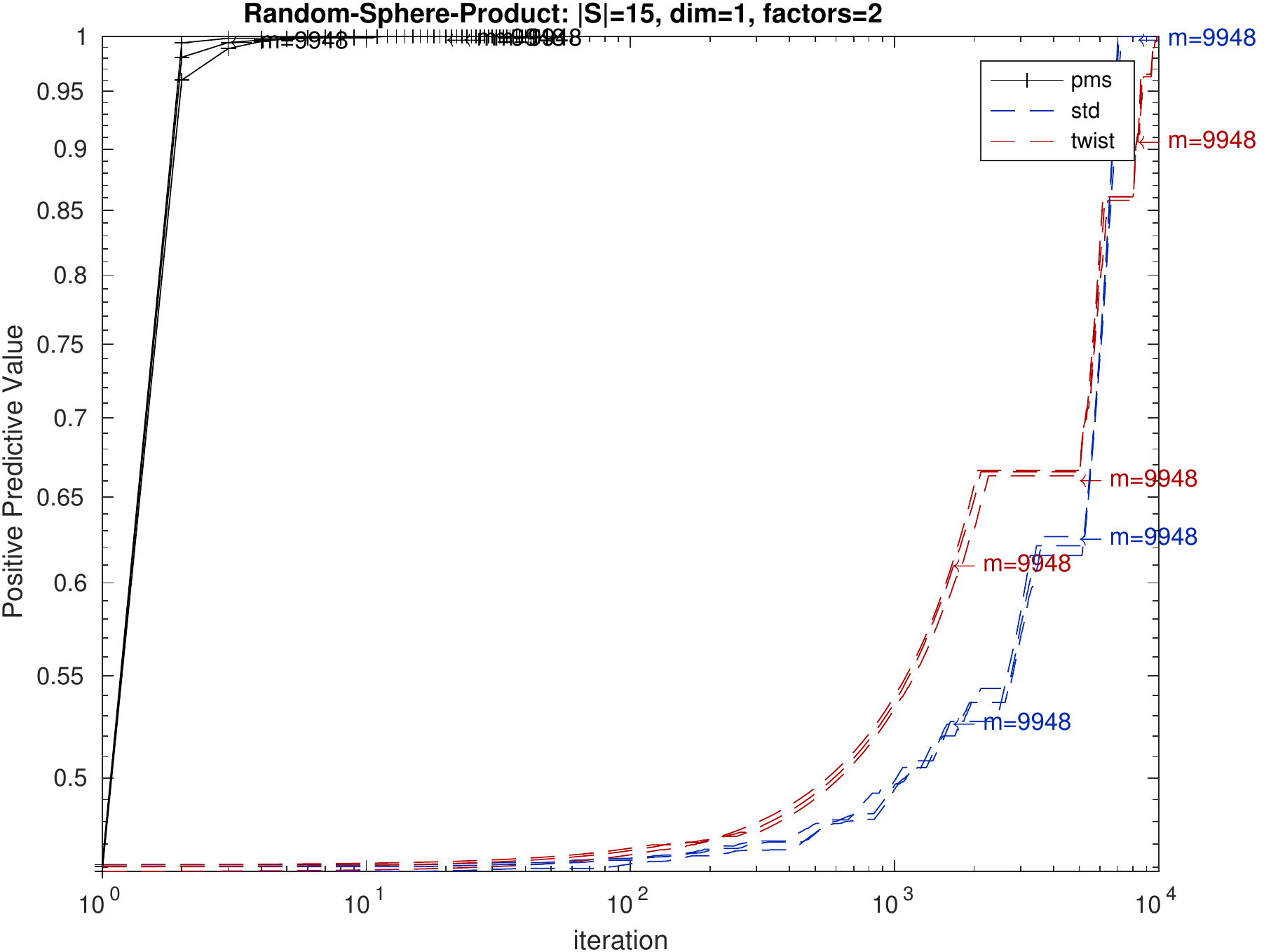}\label{fig:ee_sphere}}
\caption[Precision of $\essential$ estimation]{{\bf Precision of $\essential$ estimation.} For each ensemble, three point clouds are sampled and the corresponding simplicial complexes are reduced with each algorithm. At each iteration, $\essential$ is estimated and its precision is tracked.}
	\label{fig:ee_precision}
\end{figure*}

\begin{small}
\begin{table*}
\centering
\begin{tabular}{l|| ccc | ccc | ccc | ccc }
\toprule
\multirow{2}{*}{Precision} &
\multicolumn{3}{c}{Gaussian}&
\multicolumn{3}{c}{Figure-8}&
\multicolumn{3}{c}{Trefoil-knot}&
\multicolumn{3}{c}{Sphere-product}\\
& {pms} & {std} & {twist} & {pms} & {std} & {twist} & {pms} & {std} & {twist} & {pms} & {std} & {twist} \\
\midrule
0.10 & 1 & 1 & 1 & 1 & 1 & 1 & 1 & 1 & 1 & 1 & 1 & 1\\
0.50 & 2 & 1033 & 559 & 2 & 1231 & 493 & 2 & 2132 & 1374 & 2 & 1016 & 557\\
0.90 & 2 & 8461 & 8244 & 2 & 7599 & 8238 & 2 & 6691 & 6445 & 2 & 6675 & 8222\\
0.95 & 2 & 8636 & 8938 & 2 & 7774 & 8892 & 2 & 6871 & 7123 & 2 & 6850 & 8519\\
1.00 & 8 & 8794 & 9858 & 5 & 7932 & 9869 & 8 & 7004 & 7732 & 7 & 7008 & 9866\\
\bottomrule
\end{tabular}
\caption{{\bf Iterations to essential-estimation precision.} For each ensemble, one point cloud is sampled and the corresponding simplicial complex is reduced with each algorithm. The number of iterations to achieve a given {\em precision} of the set $\essential$ is given for each algorithm.}
\label{tab:iterations_essential}
\end{table*}
\end{small}

\section{Conclusions}

We have presented a massively parallel algorithm, Alg.\ \ref{alg:alpha_beta} for the reduction of boundary matrices in the scalable computation of persistent homology.   This work extends the foundational algorithms \cite{bauer2014clear, chen2011persistent,edelsbrunner2002topological} using many of the same notions, but allowing a dramatically greater distribution of the necessary operations. 
Our numerical experiments show that Alg\ \ref{alg:alpha_beta}, as compared with Algs.\ \ref{alg:mat_red} and \ref{alg:twist}, is able to pack more operations into few iterations, approximates $\lowstar$ simultaneously at all scales of the simplicial complex filtration, and determines the essential columns in remarkably few iterations.
%
%
This massively parallel algorithm suggests the reduction of dramatically larger boundary matrices will now be possible, and moreover allows early termination with accurate results when computational constraints are reached.
Implementation of Alg.\ \ref{alg:alpha_beta} in the leading software packages, as reported in \cite{otter2015roadmap}, is underway and we expect to report dramatic reduction in computational times in a subsequent manuscript.

\section{Acknowledgements}

The authors would like to thank Vidit Nanda for his helpful comments.
This work was supported by The Alan Turing Institute under the EPSRC grant EP/N510129/1.
RMS acknowledges the support of CONACyT.



\bibliographystyle{plain}
\bibliography{ph}

\end{document}